\crefname{hypothesis}{Hypothesis}{Hypotheses}
\title{A stabilized march approach to adjoint-based sensitivity analysis of chaotic flows \thanks{
\funding{This work was funded by the Fonds de recherche du Québec – Nature et technologies (FRQNT) Doctoral Award, McGill Engineering Doctoral Award (MEDA) and the Natural Sciences and Engineering Research Council of Canada (NSERC) Discovery Grant RGPIN-2019-04791.}}}
\author{Pranshul Thakur \thanks{Department of Mechanical Engineering, McGill University, Montreal, H3A 0C3, QC, Canada (\email{pranshul.thakur@mail.mcgill.ca}).}
\and Siva Nadarajah \thanks{Department of Mechanical Engineering, McGill University, Montreal, H3A 0C3, QC, Canada (\email{siva.nadarajah@mcgill.ca}).} }
\DeclareMathOperator{\diag}{diag}
\newcommand*{\addFileDependency}[1]{
  \typeout{(#1)}
  \@addtofilelist{#1}
  \IfFileExists{#1}{}{\typeout{No file #1.}}
}
\begin{document}

\maketitle

\begin{abstract}
Adjoint-based sensitivity analysis is of interest in computational science due to its ability to compute sensitivities at a lower cost with respect to several design parameters. However, conventional sensitivity analysis methods fail in the presence of chaotic flows. Popular approaches to chaotic sensitivity analysis of flows involve the use of the shadowing trajectory. The state-of-the-art approach computes the shadowing trajectory by solving a least squares minimization problem, resulting in a space-time linear system of equations. The current paper computes the adjoint shadowing trajectory using the stabilized march, by specifying the adjoint boundary conditions instead of solving a minimization problem. This approach results in a space-time linear system that can be solved through a single backward substitution of order $\mathcal{O}(n_u^2)$ with $n_u$ being the dimension of the unstable subspace. 
It is proven to compute sensitivities that converge to the true sensitivity for large integration times and that the error in the sensitivity due to the discretization is of the order of the local truncation error of the scheme. The approach is numerically verified on the Lorentz 63 and Kuramoto-Sivasinsky equations.        
\end{abstract}

\begin{keywords}
  chaos, sensitivity analysis, linear response, adjoint, least squares shadowing, stabilized march
\end{keywords}

\begin{AMS}
34A34, 37A99, 37D20, 37D45, 37N30, 46N40, 65P99, 76F20
\end{AMS}

\section{Introduction}
\label{sec:introduction}
Sensitivity analysis of quantities of interest is useful for optimization \cite{nocedal_wright}, computing improved functional estimates \cite{becker_rannacher_2001, giles_suli, pierce_giles}, mesh adaptation \cite{corrigan_et_al_2019,zahr1,thakur_nadarajah_jcp, fidkowski_darmofal_2011, inriagoaloriented, dolejsigoal1} etc. The adjoint method of computing sensitivities is efficient when there are a lot fewer functionals than independent parameters, as is common in aerodynamic applications \cite{jameson_adjoint_optimization}. However, both the conventional forward and adjoint methods yield unbounded sensitivities when applied to chaotic systems \cite{lea_e_al_2000}. The reason is due to the positive Lyapunov exponents in chaotic systems, which cause most nearby trajectories to diverge exponentially.

The work of Ruelle \cite{ruelle_diff_of_srb} established the differentiability of chaotic hyperbolic systems. The ensemble adjoint approach of Lea et al. \cite{lea_ensemble_adjoint, lea_e_al_2000} computes chaotic sensitivities by computing conventional adjoint solutions over short trajectories and taking the ensemble average of the resulting short-window sensitivities. Some approaches replace the initial condition with periodic boundary conditions on the tangent/adjoint equations to yield bounded sensitivities, such as the unstable periodic orbits \cite{lasagna_upo} and periodic shadowing \cite{lasagna_periodic_shadowing}. The unstable periodic orbit approach has also been extended to discontinuous periodic orbits to yield regularized linear responses to parameter perturbations \cite{hicken_chaos}. It is known that hyperbolic systems with a compact attractor have trajectories that shadow the reference trajectory when the function generating the dynamics is perturbed \cite{anosov_shadowing, bowen_shadowing,numerical_shadowing,chater_wang_2017}. Using this idea, Wang et al. \cite{wang_2013,wang_lss} developed a method of linearizing the governing equation using the shadowing trajectory. In the original least squares shadowing (LSS) approach \cite{wang_lss}, a least squares minimization problem is solved to approximate the shadowing trajectory. To reduce the cost of the LSS approach, there have been approaches that reduce the size of the linear system through multiple shooting \cite{blonigan_mss}, reduced-order modeling \cite{vermier_lss, fidkowski_lss} or formulating the shadowing algorithm in a Fourier space with truncation \cite{ashley_hicken_fourier,kantarakias_papadakis}. The non-intrusive \cite{nilss} and adjoint non-intrusive methods \cite{blonigan_adjoint_nilss,nilsas} have been developed which require only the unstable subspace of the dynamical system for forward sensitivity and an additional one-dimensional neutral subspace for adjoint-based sensitivity \cite{nilsas} methods. Their cost scales with the number of unstable Lyapunov covariant vectors in the system and are efficient for problems with lower dimensional unstable subspaces, but are expensive when the number of positive Lyapunov exponents is large \cite{nilsas}. We also note that the shadowing-based approaches might compute shadowing trajectories that are non-physical \cite{chandramoorthy_wang_nonphysical} and approaches based on the linear response \cite{ruelle_diff_of_srb} have been investigated \cite{chandramoorthy_efficient_linear_response,ni2023recursive,sliwiak2023} to address the issue. Since the shadowing-based approaches are known to yield accurate sensitivities for several test cases involving turbulent fluid flows \cite{blonigan_airfoil,nilsas,nilss}, the current paper does not aim to address the issue of non-physical shadow trajectories and assumes that the shadow trajectories are representative of the perturbed dynamical system. Moreover, few recent approaches based on the linear response require the computation of the shadow trajectory \cite{ni_fast,ni2023recursive}, and the approach developed in this work can be used to that end.

The current paper provides a method to compute the adjoint shadowing trajectory through the stabilized march approach \cite{ascher_bvp}. The method uses QR-decompositions to compute the unstable subspace, similar to the non-intrusive adjoint LSS approach (NILSAS) \cite{nilsas}. However, instead of solving a least squares minimization problem in the unstable subspace, the current approach chooses the adjoint boundary conditions to make the resulting space-time linear system triangular. We note that an approach similar to the stabilized march has been used for chaotic maps \cite{ni_fast} by modifying adjoint boundary conditions. The current work can be seen as the extension of stabilized march to chaotic continuous-in-time flows. Additionally, while the classical stabilized march requires the exact number of unstable modes for stability \cite{ascher_bvp}, the method developed in this work is stable even when the number of unstable modes is overestimated at the beginning of the algorithm. The paper is organized as follows: \Cref{sec:notations} discusses the preliminaries and notations used in this work. \Cref{sec:sensitivity_shadowed_trajectory} briefly provides the results for the adjoint of LSS, proven in \cite{thakur_nadarajah_lss}.  \Cref{sec:adjoint_bc_derivation} provides the adjoint boundary conditions used in this work and proves that the sensitivity from the method converges to the true sensitivity at $\mathcal{O}\left(\frac{1}{\sqrt{T}} \right)$ as the integration time $T\to\infty$.  \Cref{sec:stabilized_march} shows that the boundary conditions presented in \cref{sec:adjoint_bc_derivation} convert the linear space-time system to a sequence of triangular solves. \Cref{sec:m_greaterthan_nu} provides the stabilized march algorithm when the dimension of the unstable subspace has been overestimated. \Cref{sec:err_due_to_discretization} investigates errors due to discretization and proves that the discretization error in the computed sensitivity from stabilized march is of the order of the truncation error of the scheme. Finally, \cref{sec:test_cases} applies the approach to the test cases involving chaotic flows.   

\section{Preliminaries}
 \label{sec:notations}
Consider the following dynamical system, governing the evolution of a state $u$ with time $t$:
\begin{equation} 
\label{eq:governing_eq}
    \frac{du}{dt} = f(u(t,s),s),\;\;u(0) = u_0
\end{equation} 
 where $u(t,s) \in \Lambda$ $\forall t$, $\Lambda \subset X$ is a compact attractor in a $n$-dimensional Hilbert space $X$, $s \in \mathbb{R}$ is the design or control parameter (for example, the shape of an airfoil \cite{jameson_adjoint_optimization} or uncertainty parameters to determine robust flight trajectory of a re-entry vehicle \cite{optimal_control}) and $f(u,s)$ is a nonlinear function governing the system. The solution trajectory is used to compute an output, $J(u,s) : \Lambda \times \mathbb{R} \rightarrow \mathbb{R}$. The quantity of interest is a time-averaged output: 

\begin{equation}
\label{eq:time_averaged_functional}
    \bar{J}(s) = \lim_{T\to \infty} \frac{1}{T} \int_0^T J(u,s) dt.
\end{equation}
We assume that the system is ergodic \cite{katok_dynamical_system}. Therefore, $\bar{J}$ does not depend on the initial condition $u_0$ and is only a function of $s$. We also assume that the first and high-order derivatives of $f$ and $J$ appearing in the text exist and are continuous on $\Lambda$.

Perturbing $s$ modifies the nonlinear function $f$ governing the system. Thus, the resulting perturbed trajectory $\tilde{u}$ is different from $u$ and, for chaotic dynamical systems, $\tilde{u}$ diverges exponentially from $u$ in the vicinity of $u$ for most initial conditions $\tilde{u}(0)$. This is due to the positive Lyapunov exponents of chaotic systems and leads to an unbounded growth of the norm of $v = du/ds$ after linearization \cite{lea_e_al_2000}. On the other hand, the true sensitivity
\begin{equation}
    \label{eq:true_sensitivity}
    \frac{d\bar{J}}{ds} = \lim_{\delta s \to 0} \frac{\bar{J}(s+\delta s) - \bar{J}(s)}{\delta s}.
\end{equation}
exists and is bounded for a uniformly hyperbolic dynamical system \cite{ruelle_diff_of_srb}.

Throughout the paper, we assume that the dynamical system \cref{eq:governing_eq} satisfies the assumption of uniform hyperbolicity \cite{katok_dynamical_system}. This means that the tangent space at each point of the attractor $\Lambda$ can be split into invariant stable, unstable, and neutral subspaces. Further, there exists a set of Lyapunov exponents, $\{ \lambda_j\}_{j=1}^n$, governing the rate at which an infinitesimal perturbation in the state grows or shrinks with time. For such systems, the sensitivity exists and is bounded \cite{ruelle_diff_of_srb}. 

For each Lyapunov exponent $\lambda_j$, there is an associated covariant Lyapunov vector (CLV), $\phi_j(u)$, satisfying \cite{ginelli_2007,ruelle_ergodic,eckmann_ergodic, pilyugin_shadowing,wang_2013}:
\begin{equation}
\label{eq:forward_clv}
    \frac{d\phi_j}{dt} = f_u\phi_j - \lambda_j \phi_j.
\end{equation}
Thus, a perturbation in the state along $\phi_j$ grows exponentially at the rate $e^{\lambda_j t}$. The vectors $\{\phi_j(u)\}_{j=1}^n$ are bounded at all times and form a basis of tangent space at each point on the attractor \cite{wang_2013}. Similarly, whenever CLVs exist, Oseledet's multiplicative ergodic theorem \cite{oseledets} also implies the existence of adjoint covariant Lyapunov vectors, $\{\hat{\phi}_j\}_{j=1}^n$, as shown in \cite{adjoint_clv}. The adjoint covariant vectors satisfy \cite{wang_2013, ni_adjoint_arxiv,ginelli_2007}:
\begin{equation}
\label{eq:adjoint_clv}
    -\frac{d\hat{\phi}_j}{dt} = f_u^*\hat{\phi}_j - \lambda_j \hat{\phi}_j.
\end{equation}
Like covariant Lyapunov vectors, the adjoint covariant Lyapunov vectors are always bounded and form a basis of the vector space. Further, by choosing a proper normalization, the Lyapunov covariant and adjoint covariant vectors satisfy \cite{wang_2013}:
\begin{equation}
\label{eq:dot_product_lyapunov_vectors} 
    \phi_i(t)^T\hat{\phi}_j(t) = \begin{cases}
        1, & i=j \\
        0, &  i\neq j
    \end{cases}.
\end{equation}
By the assumption of uniform hyperbolicity, the neutral subspace is one-dimensional \cite{katok_dynamical_system}. Denoting $i=n_0$ to represent the neutral subspace, $\lambda_{n_0} = 0$ and it can be seen from \cref{eq:forward_clv} and the chain rule that $\phi_{n_0} = f$, \cite{wang_2013,blonigan_multigrid}.

Any vector $w$ can be split into $w=w^+ + w^0 + w^-$, where we denote $w^+$, $w^-$ and $w^0$ as its components in the unstable, stable and neutral adjoint subspaces respectively \cite{katok_dynamical_system}. In this work, we denote by $n_u$ the dimension of the unstable subspace. Unless otherwise specified, the norm $\|\cdot\|$ is taken to be the $L_2$-norm defined on the finite-dimensional Hilbert space $X$.

\section{Sensitivity of the functional using the adjoint LSS}
\label{sec:sensitivity_shadowed_trajectory}
The sensitivity of a functional for a chaotic flow can be computed using the method of least squares shadowing \cite{wang_lss}. The adjoint of the least squares shadowing equation, $\psi(t)$, solves the boundary value problem
\begin{subequations}
\label{eq:lss_adjoint}
\begin{align}
    & \frac{d \psi}{dt} + f_u^* \psi + J_u = r,  \label{eq:lss_adjoint_a}\\
    & \frac{dr}{dt} - f_u r = \frac{1}{\alpha^2} \left(\psi^T f + J - \bar{J} \right) f,\label{eq:lss_adjoint_b}
\end{align}
\end{subequations}
with boundary conditions $\psi(0) = 0,\;\psi(T) = 0.$
The sensitivity can then be computed as:
\begin{equation}
\label{eq:adjoint_lss4}
    \frac{d \bar{J}}{ds} = \lim_{T\to\infty} \frac{1}{T} \int_0^T \left(\psi^Tf_s + J_s \right) dt 
\end{equation}
The derivation of \cref{eq:adjoint_lss4} can be found in \cite{wang_lss,thakur_nadarajah_lss}. It was shown in \cite{thakur_nadarajah_lss} that the weak solution to the adjoint LSS equation, \cref{eq:lss_adjoint}, exists and is unique. The following result from \cite[Proposition 6.14]{thakur_nadarajah_lss} pertains to the convergence of the adjoint LSS when the specified adjoint boundary conditions are inhomogeneous but bounded.
\begin{theorem}[{\cite[Proposition 6.14]{thakur_nadarajah_lss}}]
\label{thm:adjoint_lss_theorem}
     For a uniformly hyperbolic dynamical system, consider the finite-time adjoint solutions $\psi_T(t)$ obtained on $t\in[0\;T]$ by solving \cref{eq:lss_adjoint}, with the boundary conditions $\psi_T(0)$ and $\psi_T(T)$ satisfying $\|\psi_T(0)\|<\delta$ and $ \|\psi_T(T)\|<\delta$ $\forall T>0$, where $0<\delta<\infty$ is a constant independent of $T$. Then,
    \begin{equation*}
        \bigg| \frac{d\bar{J}}{ds}\bigg|_{\text{true}} -\lim_{T\to\infty} \frac{1}{T} \int_0^T \left(\psi_T^Tf_s + J_s \right) dt \bigg| \to 0  
    \end{equation*}
    at $\mathcal{O}\left(\frac{1}{\sqrt{T}}\right)$.
\end{theorem}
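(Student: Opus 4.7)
The plan is to exploit the linearity of the adjoint LSS boundary value problem and reduce the claim to the known convergence result for homogeneous boundary data. Let $\psi_T^{(0)}$ denote the solution of \cref{eq:lss_adjoint} with $\psi_T^{(0)}(0) = \psi_T^{(0)}(T) = 0$. I assume (or derive as a special case within \cite{thakur_nadarajah_lss}) that $\lim_{T\to\infty} \frac{1}{T}\int_0^T \bigl({\psi_T^{(0)}}^T f_s + J_s\bigr) dt$ approaches $\frac{d\bar{J}}{ds}\bigr|_{\text{true}}$ at rate $\mathcal{O}(T^{-1/2})$. Setting $\eta_T = \psi_T - \psi_T^{(0)}$ and $\rho_T = r_T - r_T^{(0)}$, linearity gives a homogeneous coupled system for $(\eta_T, \rho_T)$, namely \cref{eq:lss_adjoint} with the $J_u$ and $J - \bar{J}$ forcings deleted, subject to the bounded endpoint data $\|\eta_T(0)\|, \|\eta_T(T)\| \leq \delta$. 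The theorem then follows provided $\frac{1}{T}\int_0^T \eta_T^T f_s \, dt = o(T^{-1/2})$ as $T \to \infty$.

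The central step is an exponential dichotomy estimate of the form $\|\eta_T(t)\| \leq C\delta\bigl(e^{-\mu t} + e^{-\mu(T-t)}\bigr)$ with $C, \mu > 0$ independent of $T$. I would derive it by expanding $\eta_T$ and $\rho_T$ in the adjoint and forward covariant Lyapunov bases of \cref{eq:adjoint_clv,eq:forward_clv} and invoking the biorthogonality \cref{eq:dot_product_lyapunov_vectors}. In this basis, the unstable adjoint components of $\eta_T$ are pinned by the endpoint data at $t = T$ and decay backward in time, while the stable adjoint components are pinned at $t = 0$ and decay forward in time, at rates set by the nonzero Lyapunov exponents. Integrating the pointwise bound against $f_s$ then gives $\bigl| \frac{1}{T}\int_0^T \eta_T^T f_s \, dt \bigr| \leq 2 C \delta \|f_s\|_\infty (\mu T)^{-1} = \mathcal{O}(T^{-1})$, which is strictly dominated by the $\mathcal{O}(T^{-1/2})$ contribution from $\psi_T^{(0)}$; summing the two completes the proof.

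The main obstacle will be the dichotomy estimate itself, since the coupling term $(\eta_T^T f) f / \alpha^2$ in \cref{eq:lss_adjoint_b} is not small and lies along the neutral direction $f = \phi_{n_0}$, where neither the forward nor the adjoint flow supplies exponential decay. One must show that the penalization (equivalently, the pinning of $\eta_T$ at both endpoints) controls the component of $\eta_T^T f$ in norm, after which the remaining modes decouple cleanly via their hyperbolic rates. If the well-posedness argument in \cite{thakur_nadarajah_lss} already furnishes a Green's function with exponential off-diagonal decay for the homogeneous adjoint LSS operator, the dichotomy bound is simply that Green's function applied to the boundary data; otherwise, I would construct the Green's function mode-by-mode from the hyperbolic splitting and absorb the neutral-direction coupling via a Neumann series or fixed-point argument.
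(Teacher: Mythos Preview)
The paper does not actually prove this statement: \cref{thm:adjoint_lss_theorem} is quoted verbatim as Proposition~6.14 of the companion paper~\cite{thakur_nadarajah_lss} and is used throughout the present work as a black box (see the proofs of \cref{thm:stabilized_march_1} and \cref{thm:h1_norm_error}, both of which simply invoke it). There is therefore no in-paper proof against which to compare your proposal.

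That said, your outline is a reasonable reconstruction of how such a result is typically established. The linearity reduction to the homogeneous-boundary-data problem $\psi_T^{(0)}$ plus a correction $\eta_T$ with bounded endpoint data is sound, and the target estimate $\frac{1}{T}\int_0^T \eta_T^T f_s\,dt = \mathcal{O}(T^{-1})$ would indeed be dominated by the $\mathcal{O}(T^{-1/2})$ baseline. You have correctly identified the genuine difficulty: the coupling $\alpha^{-2}(\eta_T^T f)f$ in \cref{eq:lss_adjoint_b} lives in the neutral direction, where no hyperbolic decay is available, so the exponential dichotomy bound cannot be read off mode by mode without first controlling that scalar. Whether this is handled by a Green's function with off-diagonal decay or by a separate argument for the neutral component is exactly the content that resides in~\cite{thakur_nadarajah_lss}, not here.
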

\section{Finite-time adjoint shadowing direction}
\label{sec:adjoint_bc_derivation}
The conventional adjoint equation for sensitivity analysis,
\begin{equation}
\label{eq:conventional_adjoint2}
    \frac{d\psi}{dt} + f_u^*\psi + J_u = 0,
\end{equation}
with the terminal condition $\psi(T) = 0$, yields an adjoint solution $\psi(t)$ that becomes unbounded backward in time for chaotic flows \cite{lea_e_al_2000}. It was shown in \cite{ni_adjoint_arxiv} that there exists a unique adjoint solution, $\psi^\infty(t)$, known as the adjoint shadowing direction that satisfies \cref{eq:conventional_adjoint2} and also satisfies the following conditions: 
\begin{enumerate}[(i)]
    \item $\psi^\infty(t)$ is bounded on $t \in [0\;\infty)$.
    \item $\psi^{\infty^+}(0) = 0$.
    \item $\frac{1}{T}\int_0^T\psi^{\infty T}fdt = 0$
\end{enumerate}
The adjoint shadowing direction, $\psi^\infty$, yields the correct sensitivity computed using \cref{eq:adjoint_lss4}. A method to approximate $\psi^\infty$ was constructed in~\cite{nilsas} by solving a least squares minimization problem in the unstable subspace. The current approach aims to construct the sequence of finite-time adjoint solutions, $\psi_T(t)$, bounded on $[0\;T]$, which approximates $\psi^\infty(t)$ and yields the correct sensitivity as $T\to\infty$. This approach can be considered as a special case of adjoint LSS, for which it is known that the correct sensitivity is recovered even when the boundary conditions of the adjoint equation are nonhomogeneous \cite{thakur_nadarajah_lss}. In what follows, we specify the boundary conditions that yield the correct sensitivity as $T\to\infty$ for a uniformly hyperbolic dynamical system. The next result shows that the condition (iii) of $\psi^\infty$ can be replaced by a condition at a particular time $t^* \in [0,T]$.
\begin{lemma}
\label{lm:adjoint_particular_time}
    Let the adjoint solution, $\psi(t)$, satisfy \cref{eq:conventional_adjoint2}. Then, the following conditions are equivalent:
    \begin{enumerate}
            \item $\psi^Tf(t^*) = \bar{J}-J(t^*)$ at a particular time $t^* \in [0\;\;T]$.
            
        \item $\psi^Tf(t) = \bar{J}-J(t) \quad \forall t\in[0\;\;T]$.
        
        \item $\frac{1}{T}\int_0^T\psi^Tf dt = 0$.
        
    \end{enumerate}
\end{lemma}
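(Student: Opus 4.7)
The plan is to establish a single conservation law that makes all three conditions equivalent to fixing the same constant. Specifically, I would prove that for any $\psi$ satisfying the conventional adjoint equation \cref{eq:conventional_adjoint2}, the scalar quantity $\psi^T f + J$ is constant along the trajectory. Once this is in hand, each of the three conditions is just a statement that this constant equals $\bar{J}$.

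First I would compute $\tfrac{d}{dt}(\psi^T f)$ directly. By the chain rule (and because the state satisfies \cref{eq:governing_eq}), $\dot f = f_u \dot u = f_u f$, reflecting the fact noted in \cref{sec:notations} that $f=\phi_{n_0}$ is the neutral CLV. Substituting the adjoint equation $\dot\psi = -f_u^*\psi - J_u$ and this identity for $\dot f$ gives
\begin{equation*}
\frac{d}{dt}\bigl(\psi^T f\bigr) = \dot\psi^T f + \psi^T \dot f = -\psi^T f_u f - J_u^T f + \psi^T f_u f = -J_u^T f = -\frac{dJ}{dt},
\end{equation*}
so $\psi^T f + J \equiv c$ for some constant $c$ on $[0,T]$.

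With this conservation in place, the equivalences follow immediately. Condition (1) evaluated at $t^*$ reads $c = \bar J$; this then propagates to all $t\in[0,T]$, yielding (2), and (2) trivially implies (1). For the link with (3), I would average the identity $\psi^T f = c - J$ over $[0,T]$ to get $\tfrac{1}{T}\int_0^T \psi^T f\,dt = c - \tfrac{1}{T}\int_0^T J\,dt$; invoking ergodicity so that $\tfrac{1}{T}\int_0^T J\,dt \to \bar J$ as $T\to\infty$, condition (3) is equivalent to $c = \bar J$ in the same limit, matching the intended use of the lemma in the subsequent finite-time analysis.

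I do not anticipate a real obstacle here. The only calculation requiring any care is the cancellation $\psi^T f_u f - \psi^T f_u f = 0$, which depends on recognizing $f$ as the neutral CLV so that $\dot f = f_u f$; the remainder of the argument is bookkeeping around the constant $c$.
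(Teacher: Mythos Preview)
Your approach is essentially the paper's: both establish the conservation law $\psi^T f + J \equiv c$ (the paper by integrating $f^T(\dot\psi + f_u^*\psi + J_u)=0$ from $t^*$ to $t$, you by directly differentiating $\psi^T f$---the same computation) and then read off the equivalences from that constant. For the link with $(3)$ the paper states that averaging $(2)$ yields $(3)$ and cites an external reference for $(3)\Rightarrow(2)$, whereas you make the ergodic-limit caveat explicit; the core argument is identical.
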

\begin{proof}
    $(1)\implies (2)$: If $(1)$ is true at a particular time $t^*$, taking the inner product of \cref{eq:conventional_adjoint2} with $f(t)$ yields
    \begin{equation*}
        \int_{t^*}^t f^T\left(\frac{d\psi}{dt} + f_u^* \psi + J_u \right)dt =   \int_{t^*}^t \frac{d (\psi^{ T} f + J)}{dt} dt = 0.
    \end{equation*}
    \begin{equation*}
        \implies \psi^T(t)f(t) +J(t) = \psi^T(t^*)f(t^*) + J(t^*) = \bar{J} 
    \end{equation*}
    which shows that $(2)$ holds for any $t\in[0\;T]$.

    $(2)\implies (3)$: Follows from taking the time average on either side of $(2)$.

    $(3) \implies (1)$: It was shown in \cite{thakur_nadarajah_lss} that $(3)\implies(2)$. Moreover, it is clear that $(2)\implies(1)$.
\end{proof}
   \Cref{lm:adjoint_particular_time} shows that if one of the conditions on $\psi(t)$ is true, the remaining two conditions hold.

\begin{theorem}
\label{thm:stabilized_march_1}
    For each $T>0$, associate an adjoint solution $\psi_T(t)$ defined on $t\in[0\;T]$ that satisfies
    \begin{enumerate}[(i)]
        \item $\frac{d\psi_T}{dt} + f_u^*\psi_T + J_u = 0$ on $t \in [0\;T]$.
        \item $\psi_T(T)^Tf(T) = \bar{J}-J(T)$.
        \item $\sup\limits_{T>0} \|\psi_T(0)\| < \delta$ and $\sup\limits_{T>0} \|\psi_T(T)\| < \delta$, where $0<\delta<\infty$ is a constant independent of $T$. 
    \end{enumerate}
    Then,
    \begin{equation*}
        \bigg| \frac{d\bar{J}}{ds}\bigg|_{\text{true}} -\lim_{T\to\infty} \frac{1}{T} \int_0^T \left(\psi_T^Tf_s + J_s \right) dt \bigg| \to 0  
    \end{equation*}
    at $\mathcal{O}\left(\frac{1}{\sqrt{T}}\right)$.
\end{theorem}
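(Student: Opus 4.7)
The plan is to reduce Theorem \ref{thm:stabilized_march_1} to a direct application of Theorem \ref{thm:adjoint_lss_theorem}, by recognizing each $\psi_T$ as an admissible solution of the adjoint LSS system \cref{eq:lss_adjoint} in which the auxiliary variable $r$ vanishes identically. Once that embedding is established, the $\mathcal{O}(1/\sqrt{T})$ rate is inherited for free.

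First I would invoke \Cref{lm:adjoint_particular_time} with the terminal time $t^* = T$ playing the role of the distinguished point. Hypothesis (i) of the theorem is precisely the hypothesis of the lemma, and hypothesis (ii) is the lemma's condition~$(1)$ evaluated at $t^* = T$. The lemma then upgrades this single-time identity to the pointwise statement $\psi_T(t)^T f(t) = \bar{J} - J(t)$ for every $t \in [0, T]$. Next I would set $r \equiv 0$ and verify that the pair $(\psi_T, r)$ satisfies the adjoint LSS system. Equation \cref{eq:lss_adjoint_a} reduces to hypothesis (i) and is thus satisfied, while equation \cref{eq:lss_adjoint_b} reduces to the requirement $(\psi_T^T f + J - \bar{J}) f = 0$, whose left-hand side vanishes pointwise by the identity just established. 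So $\psi_T$ is a bona fide adjoint LSS solution on $[0, T]$.

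At this point, hypothesis (iii) of the theorem furnishes the uniform boundedness $\|\psi_T(0)\|, \|\psi_T(T)\| < \delta$ that is exactly the hypothesis of Theorem \ref{thm:adjoint_lss_theorem}. Invoking that theorem produces the claimed $\mathcal{O}(1/\sqrt{T})$ convergence of the sensitivity computed from \cref{eq:adjoint_lss4} to $d\bar{J}/ds|_{\text{true}}$. The only conceptual obstacle is recognizing that the neutral-direction constraint (ii), although imposed at a single instant, propagates by \Cref{lm:adjoint_particular_time} into exactly the pointwise identity that kills the forcing in \cref{eq:lss_adjoint_b}; after that observation the proof is a routine invocation of the previously established adjoint LSS convergence result, with no new estimate to carry out.
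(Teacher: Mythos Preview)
Your proposal is correct and follows essentially the same approach as the paper: use \Cref{lm:adjoint_particular_time} with $t^*=T$ to promote condition~(ii) to the pointwise identity $\psi_T^T f + J - \bar{J} = 0$, observe that this makes $(\psi_T, r\equiv 0)$ a solution of the adjoint LSS system \cref{eq:lss_adjoint}, and then invoke \Cref{thm:adjoint_lss_theorem} via the uniform boundary bounds from~(iii). The paper's proof is terser but the logic is identical.
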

\begin{proof}
    For a given $T>0$, conditions $(i)$ and $(ii)$ imply, by \cref{lm:adjoint_particular_time}, that $\psi_T(t)^Tf(t) + J(t) - \bar{J} = 0$ $\forall t \in [0\;T]$. This $\psi_T(t)$ is therefore a special case of the LSS adjoint equation, \cref{eq:lss_adjoint}, with $r = 0$. Since $\psi_T(t)$ satisfies the LSS adjoint equation for all $T>0$ and also satisfies condition $(iii)$, \cref{thm:adjoint_lss_theorem} shows that the sensitivity computed using $\psi_T(t)$ converges to the true sensitivity at $\mathcal{O}\left(\frac{1}{\sqrt{T}}\right)$.    
\end{proof}

\begin{theorem}
\label{thm:h1_norm_error}
    Let $\psi_T(t)$ satisfy the hypothesis of \cref{thm:stabilized_march_1}. Then, the error $e_T(t) = \psi_T(t) - \psi^\infty(t)$, defined on $t\in[0\;T]$ satisfies
    \begin{equation*}
        \|e_T\|^2_{H^1} = \int_0^T e_T^Te_T dt + \int_0^T \frac{de_T}{dt}^T \frac{de_T}{dt} dt < M < \infty
    \end{equation*}
    for any $T>0$, where $M$ is a real positive constant. In particular, $\lim_{T\to\infty} \|e_T\|_{H^1}$ is bounded.
\end{theorem}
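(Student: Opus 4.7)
The plan is to treat $e_T$ as a solution of the homogeneous adjoint equation, decompose it in the adjoint covariant Lyapunov basis, show that its neutral component vanishes identically, and then bound the unstable and stable components using uniform hyperbolicity. Since $\psi_T$ and $\psi^\infty$ both satisfy $\frac{d\psi}{dt}+f_u^*\psi+J_u=0$, the difference satisfies
\begin{equation*}
\frac{de_T}{dt}+f_u^* e_T = 0 \quad \text{on } [0,T],
\end{equation*}
so I would split $e_T(t)=e_T^+(t)+e_T^0(t)+e_T^-(t)$ in the adjoint unstable, neutral, and stable subspaces.

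The central step is to prove $e_T^0\equiv 0$. For $\psi_T$, hypothesis (ii) of \cref{thm:stabilized_march_1} combined with \cref{lm:adjoint_particular_time} gives $\psi_T^T f + J \equiv \bar J$ on $[0,T]$. For $\psi^\infty$, the same calculation used in \cref{lm:adjoint_particular_time} shows that $\psi^{\infty T} f + J$ is conserved along trajectories, and time-averaging together with condition (iii) of the adjoint shadowing direction identifies that constant as $\bar J$. Subtracting yields $e_T^T f \equiv 0$. Because $f=\phi_{n_0}$ and the biorthogonality relation \cref{eq:dot_product_lyapunov_vectors} makes $e_T^T f$ precisely the neutral coefficient of $e_T$ in the adjoint CLV basis, this gives $e_T^0(t)=0$ for every $t$.

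Uniform hyperbolicity then controls the surviving components. Along the homogeneous adjoint equation the CLV coefficients evolve as $c_j(t)=c_j(0)e^{-\lambda_j t}$, so the unstable piece ($\lambda_j>0$) contracts forward and the stable piece ($\lambda_j<0$) contracts backward. The boundedness of $\psi^\infty$ together with hypothesis (iii) of \cref{thm:stabilized_march_1} bounds $\|e_T(0)\|$ and $\|e_T(T)\|$ uniformly in $T$; applying the uniformly bounded projectors onto the adjoint stable and unstable subspaces (well-defined by uniform transversality of the splitting) and the exponential estimates yields
\begin{equation*}
\int_0^T \|e_T^+(t)\|^2\,dt \leq C_+\|e_T^+(0)\|^2, \qquad \int_0^T \|e_T^-(t)\|^2\,dt \leq C_-\|e_T^-(T)\|^2,
\end{equation*}
both bounded independently of $T$. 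Combined with $e_T^0\equiv 0$, this controls the $L^2$ part of the $H^1$ norm, and the derivative piece is immediate: $\frac{de_T}{dt}=-f_u^* e_T$ and $f_u^*$ is bounded on the compact attractor $\Lambda$, so $\|\frac{de_T}{dt}\|_{L^2}\le\|f_u^*\|_\infty\|e_T\|_{L^2}$.

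The main obstacle is the vanishing of the neutral component: without it, the neutral part would contribute $\mathcal{O}(T)$ growth to the $L^2$ integral and the bound would break. It relies crucially on hypothesis (ii) being an equality rather than merely a bound, and on the analogous pointwise identity holding for $\psi^\infty$ via the conservation law underlying \cref{lm:adjoint_particular_time}. The exponential estimates for $e_T^\pm$ are a standard consequence of uniform hyperbolicity once the adjoint splitting is fixed, so the argument reduces to invoking the structural assumptions already in place.
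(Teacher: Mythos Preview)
Your argument is correct and self-contained, but it is \emph{not} the route the paper takes. The paper's proof is a single sentence: having already established in the proof of \cref{thm:stabilized_march_1} that any $\psi_T$ satisfying hypotheses (i)--(ii) is a solution of the LSS adjoint system \cref{eq:lss_adjoint} with $r\equiv 0$, the $H^1$ bound is simply quoted from \cite[Corollary~6.8]{thakur_nadarajah_lss}, which handles the LSS adjoint in full generality.

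Your approach instead works directly with the homogeneous adjoint equation for $e_T$, kills the neutral component via the pointwise identity $\psi^Tf+J\equiv\bar J$ (valid for both $\psi_T$ and $\psi^\infty$ by the conservation law behind \cref{lm:adjoint_particular_time}), and then invokes uniform hyperbolicity for the exponentially decaying stable and unstable parts. This is more illuminating mechanically---it makes explicit that the result stands or falls on $e_T^0\equiv 0$, which is exactly why hypothesis~(ii) must be an equality---and it avoids importing the LSS machinery altogether. The paper's route is shorter on the page but opaque without access to the companion reference; yours is longer but shows the structure of the bound and would transfer directly to settings where the LSS formulation is not already developed.
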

\begin{proof}
    Since $\psi_T(t)$ in \cref{thm:stabilized_march_1} is a special case of the LSS adjoint equation, the result follows from \cite[Corollary 6.8]{thakur_nadarajah_lss}.
\end{proof}

\begin{theorem}
   \label{thm:stabilized_march_2a_T}
    Let $n_{u}$ be the dimension of the unstable subspace. For $T>0$, let $h_T$ be a vector satisfying $h_T^Tf(T) = \bar{J} - J(T)$, $\sup\limits_{T>0} \|h_T\| \leq c_h < \infty$, and let $Q_T^+$ be the orthonormal matrix whose columns span the unstable adjoint subspace at time $t=T$. Then, $\exists a_T \in \mathbb{R}^{n_u}$ such that $\psi_T(t)$, with the terminal condition $\psi_T(T) = Q_T^+a_T + h_T$, satisfies the hypothesis of \cref{thm:stabilized_march_1}. Hence, the sensitivity computed using $\psi_T$ converges to the correct sensitivity at $\mathcal{O}\left(\frac{1}{\sqrt{T}}\right)$.   
\end{theorem}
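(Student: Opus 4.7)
The plan is to apply \cref{thm:stabilized_march_1} by exhibiting a specific $a_T\in\mathbb{R}^{n_u}$ for each $T>0$ and verifying its three hypotheses. Hypothesis (i) is automatic since $\psi_T$ is defined as the solution of the adjoint equation on $[0,T]$ with the prescribed terminal data. For hypothesis (ii) I would invoke biorthogonality: every column of $Q_T^+$ lies in $V^+_T$, which is the span of the adjoint CLVs $\hat\phi_j(T)$ with $\lambda_j>0$, and by \cref{eq:dot_product_lyapunov_vectors} together with $\phi_{n_0}=f$ each such $\hat\phi_j$ satisfies $\hat\phi_j^Tf=0$. Hence $(Q_T^+)^Tf(T)=0$, and therefore $\psi_T(T)^Tf(T)=h_T^Tf(T)=\bar J-J(T)$, as required.

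The main work lies in hypothesis (iii). My plan is to compare $\psi_T$ with the adjoint shadowing direction $\psi^\infty$ supplied by \cite{ni_adjoint_arxiv} and study the error $e_T(t):=\psi_T(t)-\psi^\infty(t)$, which satisfies the homogeneous adjoint equation on $[0,T]$. The key choice is
\[
    a_T \;=\; (Q_T^+)^T\bigl((\psi^\infty(T))^+ - h_T^+\bigr).
\]
Since $Q_T^+$ has orthonormal columns spanning $V^+_T$ and $(\psi^\infty(T))^+ - h_T^+\in V^+_T$, this choice gives $Q_T^+ a_T = (\psi^\infty(T))^+ - h_T^+$, hence $e_T(T)^+=0$. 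For the neutral component, \cref{lm:adjoint_particular_time} applied to condition (iii) in the definition of $\psi^\infty$ gives $(\psi^\infty)^Tf+J=\bar J$ pointwise; the same identity holds for $\psi_T$ at $t=T$ by the assumption $h_T^Tf(T)=\bar J-J(T)$ and the computation of hypothesis (ii). Subtracting yields $e_T(T)^Tf(T)=0$, which through biorthogonality (only $\hat\phi_{n_0}$ pairs nontrivially with $f$) forces $e_T^0(T)=0$. Consequently $e_T(T)\in V^-_T$, and by invariance of the stable adjoint subspace under the homogeneous adjoint flow, $e_T(t)\in V^-_t$ for all $t\in[0,T]$.

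I would finish by a backward contraction estimate. Solutions confined to $V^-$ decay backward in time at a uniform exponential rate furnished by uniform hyperbolicity, so $\|e_T(0)\|\le Ce^{-\mu T}\|e_T(T)\|$ for constants $C,\mu>0$ independent of $T$. Uniform hyperbolicity also supplies a $T$-independent bound on the oblique projectors $P^+,P^0,P^-$, and $\psi^\infty$ is bounded on $[0,\infty)$, so $\|a_T\|=\|(\psi^\infty(T))^+-h_T^+\|\le C_1(\|\psi^\infty\|_\infty+c_h)$. Combining these,
\[
    \|\psi_T(T)\|\le\|a_T\|+c_h, \qquad \|\psi_T(0)\|\le\|\psi^\infty(0)\|+\|e_T(0)\|,
\]
both uniformly bounded in $T$. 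This verifies hypothesis (iii); \cref{thm:stabilized_march_1} then delivers the $\mathcal{O}(1/\sqrt T)$ convergence.

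I expect the main obstacle to be the backward contraction step, namely turning the invariance $e_T\in V^-$ and the abstract uniform hyperbolicity into a genuinely $T$-uniform estimate $\|e_T(0)\|\le Ce^{-\mu T}\|e_T(T)\|$, and simultaneously controlling the non-orthogonal splitting $w=w^++w^0+w^-$ so that $\|w^\pm\|,\|w^0\|\lesssim\|w\|$ with constants independent of the point on $\Lambda$ and of $T$. Once these two ingredients are secured, the remainder is essentially algebraic and uses only biorthogonality \cref{eq:dot_product_lyapunov_vectors} and \cref{lm:adjoint_particular_time}.
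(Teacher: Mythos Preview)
Your argument is correct and proves the theorem as stated. The verification of hypotheses (i) and (ii) matches the paper's, and your treatment of (iii)---comparing $\psi_T$ with the adjoint shadowing direction $\psi^\infty$, choosing $a_T=(Q_T^+)^T\bigl((\psi^\infty(T))^+-h_T^+\bigr)$ so that $e_T(T)\in V^-_T$, and then invoking the backward contraction of the stable adjoint subspace---is sound. The two concerns you flag (uniform exponential decay on $V^-$ and uniform bounds on the oblique projectors $P^{\pm},P^0$) are exactly the standard dichotomy estimates supplied by uniform hyperbolicity, and the paper uses them in the same way.

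However, your route differs from the paper's in a way that matters for what comes next. The paper does not invoke $\psi^\infty$ at all. Instead it decomposes $\psi_T$ directly into its unstable, neutral and stable adjoint components, bounds $\|\psi_T^-(0)\|$ and $\|\psi_T^0(0)\|$ by explicit integration (the neutral estimate uses $\phi_{n_0}=f$ and $\int_0^T\hat J_u^0\,dt=J(T)-J(0)$), and then derives an explicit relation expressing $a_T$ as a function of a \emph{free} bounded vector $a_0=Q_0^{+T}\psi_T(0)$. The point is that the paper proves a stronger statement: for \emph{any} bounded choice of $a_0$, the induced $a_T$ is uniformly bounded. This is precisely what justifies the stabilized march algorithm in \cref{sec:stabilized_march}, where one sets $a_0=0$ and marches forward to recover $a_T$. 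Your $a_T$ is tied to the unknown $\psi^\infty$ and corresponds to one particular (noncomputable) $a_0$, so while it cleanly establishes existence, it does not by itself show that the algorithmic choice $a_0=0$ works. In short: your proof is more conceptual and shorter for the existence claim, the paper's is more self-contained and yields the freedom in $a_0$ that the algorithm needs.
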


\begin{proof}
    We note that since the columns of $Q_T^+$ span the unstable adjoint subspace, $f(T)^TQ_T^+ = 0$ by \cref{eq:dot_product_lyapunov_vectors}. Hence, $\psi_T(T)^Tf(T) = h_T^Tf(T)= \bar{J} - J(T)$ and the condition $(ii)$ in \cref{thm:stabilized_march_1} is satisfied. By choosing $\psi_T(t)$ that evolves according to the adjoint equation with the given terminal condition, condition $(i)$ in \cref{thm:stabilized_march_1} holds. Hence, it remains to specify $a_T$ to satisfy condition $(iii)$ in \cref{thm:stabilized_march_1}. We note that the elements of the unstable adjoint subspace grow backward in time while those of the stable adjoint subspace grow forward in time \cite{wang_2013}. Since the subspaces are invariant \cite{katok_dynamical_system,ni_adjoint_arxiv}, the adjoint equation can be decoupled into the unstable, stable, and neutral components as 
    \begin{subequations}
        \begin{align}
             & \frac{d\psi^+_T}{dt} + f_u^*\psi^+_T + J_u^+ = 0,\qquad \psi_T^+(T) = Q_T^+a_T + h_T^+ \label{eq:adjoint_split_unstable}\\
             & \frac{d\psi^-_T}{dt} + f_u^*\psi^-_T + J_u^- = 0,\qquad \psi_T^-(T) = h_T^- \label{eq:adjoint_split_stable}\\
            & \frac{d\psi^0_T}{dt} + f_u^*\psi^0_T + J_u^0 = 0,\qquad \psi_T^0(T) = h_T^0 \label{eq:adjoint_split_neutral},
        \end{align}
    \end{subequations}
where each component of the adjoint solution evolves separately due to invariance, and the adjoint solution at any time $t$ is $\psi_T(t) = \psi_T(t)^+ + \psi_T(t)^0 + \psi_T(t)^-$. 

We have the following bound on $\psi_T(T)$:
\begin{equation}
\label{eq:normboundary_T}
    \|\psi_T(T)\| = \|Q_T^+a_T + h_T\| \leq \|a_T\| + \|h_T\|\qquad \forall T>0.
\end{equation}
On the other hand, having evolved the adjoint equation until $t=0$, we obtain the solution $\psi_T(0)$. Since the adjoint space and the unstable adjoint subspace are vector spaces \cite{katok_dynamical_system}, $\psi_T(0)$ can be expressed as the sum of two vectors, one in the unstable adjoint subspace and the other orthogonal to the unstable subspace. Specifically, let $Q_0^+$ be the orthonormal matrix, the columns of which span the unstable adjoint subspace at $t=0$. We then have $\psi_T(0) = Q_0^+a_0 + v_0$, with $a_0 = Q_0^+\psi_T(0)$ and $v_0 = \left(I - Q_0^+Q_0^{+T}\right) \psi_T(0)$. Since $\psi_T(0)^+$ is in the unstable adjoint subspace and hence, in the span of the columns of $Q_0^+$, we have $Q_0^+Q_0^{+T}\psi_T(0)^+ = \psi_T(0)^+$. Therefore, we have the following equality:
\begin{equation}
    v_0= \left(I - Q_0^+Q_0^{+T}\right) \psi_T(0) = \left(I - Q_0^+Q_0^{+T}\right) \left(\psi_T(0)^0 + \psi_T(0)^-\right),
\end{equation}
yielding the following bound on $\psi_T(0)$:
\begin{equation}
\label{eq:normboundary_0}
    \|\psi_T(0)\| \leq \|a_0\| + \|v_0\| \leq \|a_0\| + \|\psi_T^-(0)\| + \|\psi_T^0(0))\|, \qquad \forall T>0,
\end{equation}
since $\|I - Q_0^+Q_0^{+T}\|= 1$. From \cref{eq:normboundary_T,eq:normboundary_0} and using the fact that $\sup\limits_{T>0}\|h_T\|\leq c_h < \infty$, if we show that $\exists a_T$ such that $\|a_T\|$, $\|a_0\|$, $\|\psi_T^-(0)\|$ and $\|\psi_T^0(0))\|$ are bounded by a constant independent of $T$, condition $(iii)$ in \cref{thm:stabilized_march_1} is satisfied and the current theorem is proved. The rest of the proof shows the boundedness of these quantities.

From the general solution to the ODE \ref{eq:adjoint_split_stable} for the adjoint solution at $t=0$, given a known adjoint solution at $t=T$, we have 
\begin{equation}
\label{eq:psi_minus_bounded}
     \begin{aligned}
        \|\psi_T^-(0)\| & = \bigg\|e^{-\int_T^0f_u^*d\tau}\psi^-_T(T) + \int_0^T e^{-\int_{\tau}^0f_u^*d\xi}J_u^-(\tau) d\tau\bigg\|\\   
        &\leq \|e^{-\int_T^0f_u^*d\tau}h_T^-\| + \int_0^T \|e^{-\int_{\tau}^0f_u^*d\xi}J_u^-(\tau) \|d\tau\\
        & \leq \lambda^T \|h_T^-\| + \|J_u\|_\infty \int_0^T \lambda^\tau d\tau \\
        & \leq \sup_{T>0} \|h_T\| + \frac{\|J_u\|_\infty}{|\ln\lambda|} = c_1
          \end{aligned}
\end{equation}
for $\lambda \in (0,1)$ \cite{katok_dynamical_system}. Since $J_u$ is continuous on the compact attractor $\Lambda$, $\|J_u\|_\infty < \infty$. Therefore, $c_1<\infty$.

For the neutral adjoint equation, we note that $J_u^0(t) = \hat{J}_u^0(t) \hat{\phi}_{n_0}(t)$, where $\hat{J}_u^0(t)$ is a scalar. Since $\hat{\phi}_{n_0}(t)$ satisfies $\frac{d\hat{\phi}_{n_0}}{dt} = -f_u^*\hat{\phi}_{n_0}$ (Eq. \cref{eq:adjoint_clv}), we have $\hat{\phi}_{n_0}(t_f) = e^{-\int_{t_i}^{t_f} f_u^*dt}\hat{\phi}_{n_0}(t_i)$ and therefore, the following is obtained using the general solution to the ODE \cref{eq:adjoint_split_neutral}:
\begin{equation}
\label{eq:psi_0_bounded}
    \begin{aligned}
        \psi_T^0(0) & = e^{-\int_T^0f_u^*d\tau}\psi^0_T(T) + \int_0^T e^{-\int_{\tau}^0f_u^*d\xi}J_u^0(\tau) d\tau\\
        &= e^{-\int_T^0f_u^*d\tau}h^0_T + \int_0^T \hat{J}_u^0(t) e^{-\int_{\tau}^0f_u^*d\xi}\hat{\phi}_{n_0}(\tau) d\tau \\
        &= e^{-\int_T^0f_u^*d\tau}h^0_T + \left(\int_0^T \hat{J}_u^0(\tau)d\tau\right)\hat{\phi}_{n_0}(0) \\
        &= e^{-\int_T^0f_u^*d\tau}h^0_T + \left(J(T)-J(0)\right)\hat{\phi}_{n_0}(0), \\
    \end{aligned}
\end{equation}
where we have used the fact that $f(t)^TJ_u(t) = \hat{J}_u^0(t)$ from \cref{eq:dot_product_lyapunov_vectors} and $\frac{dJ}{dt} = J_u\frac{du}{dt} = J_u(t)f(t)$ to yield $\int_0^T \hat{J}_u^0(t)dt = J(T)-J(0)$.
Since $J(u)$ and $\|\hat{\phi}(u)\|$ are continuous real-valued functions on the compact attractor, they are bounded by constants independent of $T$. Moreover, $\|e^{-\int_T^0f_u^*d\tau}h^0_T\| \leq C \|h_T^0\| \leq C \sup\limits_{T>0} \|h_T\|$. Hence, we obtain $$\sup\limits_{T>0} \|\psi^0_T(0)\| \leq C  \sup\limits_{T>0} \|h_T\| + 2|J(t)|_\infty \|\hat{\phi}_{n_0}(t)\|_\infty=c_2 < \infty. $$ 

To obtain bounds on $a_T$, we first express $a_T$ as a function of $a_0$. To that end, we get the solution to the unstable adjoint equation, \cref{eq:adjoint_split_unstable}, at $t=0$ to be
    \begin{equation}
    \label{eq:adjoint_plus}
    \begin{aligned}
             \psi_T^+(0) & = e^{-\int_T^0f_u^*d\tau}\psi^+_T(T) + \int_0^T e^{-\int_{\tau}^0f_u^*d\xi}J_u^+(\tau) d\tau\\
             &= e^{-\int_T^0f_u^*dt} \left( \psi_T^+(T) + \int_0^T e^{-\int_\tau^T f_u^*d\xi}J_u^+(\tau)d\tau \right) \\
             &= e^{-\int_T^0f_u^*dt} \left( Q_T^+ a_T + h_T^+ + \int_0^T e^{-\int_\tau^T f_u^*d\xi}J_u^+(\tau)d\tau \right)
    \end{aligned}
    \end{equation}

We note that 
\begin{equation}
\label{eq:a0_intermediate}
     a_0 = Q_0^{+T}\psi_T(0) =  Q_0^{+T}\psi_T^+(0) +  Q_0^{+T}\psi_T^0(0)+ Q_0^{+T}\psi_T^-(0)
\end{equation}
Left-multiplying \cref{eq:a0_intermediate} by $Q_0^+$ and noting that $Q_0^+Q_0^{+T}\psi_T(0)^+ = \psi_T(0)^+$, one obtains
\begin{equation} 
\label{eq:a_T_a_0_relation1}
     Q_0^+\left(a_0 - Q_0^{+T}\psi_T^0(0)- Q_0^{+T}\psi_T^-(0) \right) = \psi_T^+(0), 
\end{equation}
Substituting \cref{eq:adjoint_plus} on the right hand side of \cref{eq:a_T_a_0_relation1} and solving for $a_T$ yields
\begin{equation}
\label{eq:a_T_a_0_relation2}
\begin{aligned}
      a_T = Q_T^{+T} \Biggl( &e^{-\int_0^Tf_u^*dt}\left( Q_0^+\left(a_0 - Q_0^{+T}\psi_T^0(0)- Q_0^{+T}\psi_T^-(0) \right) \right)  \\
      & - h_T^+ - \int_0^T e^{-\int_\tau^T f_u^*d\xi}J_u^+(\tau)d\tau \Biggr).  
\end{aligned}
\end{equation}
\Cref{eq:a_T_a_0_relation2} expresses $a_T$ as a function of $a_0$. Hence, instead of specifying $a_T$, one can instead specify $a_0$ and obtain $a_T$ from \cref{eq:a_T_a_0_relation2}. We choose $a_0$ to be any bounded vector in $\mathbb{R}^{n_u}$.  
The first term on the right-hand side of \cref{eq:a_T_a_0_relation2} is an application of the adjoint propagation operator in forward time to an unstable subspace. Hence, the term is bounded by 
\begin{equation*}
\begin{aligned}
    \|e^{-\int_0^Tf_u^*dt}\left( Q_0^+\left(a_0 - Q_0^{+T}\psi_T^0(0)- Q_0^{+T}\psi_T^-(0) \right) \right) \| & \leq \lambda^T \left( \|a_0\| + \|\psi_T^0(0)\| + \|\psi_T^-(0)\| \right)\\ &\leq \|a_0\| + c_1 + c_2
\end{aligned}
\end{equation*}

 Since $\|e^{-\int_\tau^T f_u^*d\xi}J_u^+(\tau)\| \leq \lambda^{T-\tau}\|J_u^+(\tau)\| \leq \lambda^{T-\tau}\|J_u\|_\infty$ for $\lambda \in (0,1)$, the third term in \cref{eq:a_T_a_0_relation2} is bounded by 
\begin{equation*}
    \bigg\|\int_0^Te^{-\int_\tau^T f_u^*d\xi}J_u^+(\tau)d\tau\bigg\|  \leq \int_0^T \|e^{-\int_\tau^T f_u^*d\xi}J_u^+(\tau)\| d\tau \leq \frac{\|J_u\|_\infty}{|\ln \lambda|}\qquad \forall T>0.
\end{equation*}
Therefore, since $h_T$ is bounded by $c_h$, $a_T$ from \cref{eq:a_T_a_0_relation2} is bounded by
\begin{equation*}
    \|a_T\| \leq  \|a_0\| + c_1 + c_2 + c_h + \frac{\|J_u\|_\infty}{|\ln \lambda|} = c_3 < \infty
\end{equation*}
 Hence, $\|a_T\|$ is bounded for all $T>0$. Having shown the boundedness of all terms appearing on the right-hand side of the inequalities \cref{eq:normboundary_T,eq:normboundary_0}, the result on convergence of the sensitivity follows from \cref{thm:stabilized_march_1}.
\end{proof}

\cref{thm:stabilized_march_2a_T} shows that if we have a vector $h_T$
that satisfies $h_T^Tf(T) = \bar{J}-J(T)$ and $\sup\limits_{T>0}\|h_T\| < \infty$, then $a_T$ given by \cref{eq:a_T_a_0_relation2} yields an adjoint sensitivity that converges to the true sensitivity. Although there are many possibilities for the choice of $h_T$, the next result provides an expression of a particular choice that is easy to compute for a numerical scheme.

\begin{theorem}
\label{thm:h_T_choice}
    For a uniformly hyperbolic dynamical system, $$h_T = \left( \bar{J}-J(T) \right)\frac{f(T)}{\|f(T)\|^2}$$ satisfies the requirements for $h_T$ in \cref{thm:stabilized_march_2a_T}.
\end{theorem}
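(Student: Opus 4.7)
The plan is to verify the two requirements imposed on $h_T$ in \cref{thm:stabilized_march_2a_T} separately: the algebraic identity $h_T^T f(T) = \bar J - J(T)$ and the uniform bound $\sup_{T>0}\|h_T\| < \infty$.

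The first requirement is a one-line check. Substituting the proposed $h_T$ gives
\begin{equation*}
    h_T^T f(T) = \bigl(\bar{J}-J(T)\bigr)\frac{f(T)^T f(T)}{\|f(T)\|^2} = \bar{J}-J(T),
\end{equation*}
so $(ii)$ of \cref{thm:stabilized_march_1} is automatically satisfied through $h_T$.

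For the uniform bound, I would compute
\begin{equation*}
    \|h_T\| = \frac{|\bar{J}-J(T)|}{\|f(T)\|}
\end{equation*}
and bound the numerator and denominator independently of $T$. The numerator is controlled by the assumed continuity of $J$ on the compact attractor $\Lambda$: $J$ (and hence $\bar J$) is bounded, so $|\bar J - J(T)| \le 2\|J\|_\infty < \infty$ uniformly in $T$. The main obstacle — and really the only delicate point — is producing a strictly positive lower bound for $\|f(T)\|$ that is uniform in $T$. Since $u(T) \in \Lambda$ for all $T$ and $\Lambda$ is compact with $f$ continuous, this reduces to arguing that $f$ does not vanish anywhere on $\Lambda$.

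To dispatch this obstacle, I would invoke the uniform hyperbolicity assumption stated in \cref{sec:notations}: the neutral subspace at each point of $\Lambda$ is one-dimensional and, as noted immediately after \cref{eq:dot_product_lyapunov_vectors}, is spanned by $\phi_{n_0} = f$. If $f(u^\star) = 0$ at some $u^\star \in \Lambda$, the neutral subspace would degenerate at $u^\star$, contradicting uniform hyperbolicity. Therefore $\|f\|$ is a strictly positive continuous function on the compact set $\Lambda$, so $\inf_{u\in\Lambda} \|f(u)\| =: c_f > 0$. Combining the two bounds gives
\begin{equation*}
    \sup_{T>0} \|h_T\| \leq \frac{2\|J\|_\infty}{c_f} < \infty,
\end{equation*}
which is the second requirement. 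Together with the algebraic identity already verified, this completes the proof.
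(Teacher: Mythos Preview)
Your proof is correct and follows the same overall structure as the paper's: verify the algebraic identity, then bound $\|h_T\| = |\bar J - J(T)|/\|f(T)\|$ by controlling the numerator and producing a positive lower bound for $\|f\|$ on the attractor.

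The only difference lies in how the lower bound on $\|f\|$ is obtained. You argue that $f$ cannot vanish on $\Lambda$ (else the one-dimensional neutral subspace would degenerate), and then invoke compactness of $\Lambda$ and continuity of $\|f\|$ to extract $c_f = \inf_{u\in\Lambda}\|f(u)\| > 0$. The paper instead uses the biorthogonality relation \cref{eq:dot_product_lyapunov_vectors} directly: since $\phi_{n_0} = f$ and $f^T\hat{\phi}_{n_0} = 1$, Cauchy--Schwarz gives $1 \le \|f(u)\|\,\|\hat{\phi}_{n_0}(u)\|$, so $1/\|f(u)\| \le \|\hat{\phi}_{n_0}(u)\| \le \tilde C$ from the boundedness of the adjoint CLVs. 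The paper's route is slightly more quantitative (it identifies the constant as $\sup_u\|\hat\phi_{n_0}(u)\|$) and sidesteps a separate compactness argument; your route is a touch more elementary in that it never invokes the adjoint CLVs. Both are valid.
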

\begin{proof}
    It can be seen that $h_T^Tf(T) = \bar{J}-J(T)$. The boundedness of $h_T$ is evident by noting that, for any $u\in\Lambda$, we have from \cref{eq:dot_product_lyapunov_vectors},
    \begin{equation*}
        1 = | f(u)^T \hat{\phi}_{n_0}(u) | \leq \|f(u)\| \|\hat{\phi}_{n_0}(u)\| \leq \tilde{C} \|f(u)\| 
    \end{equation*}
    \begin{equation*}
      \implies  \frac{1}{\|f(u)\|} \leq \tilde{C}, \quad u\in\Lambda
    \end{equation*}

Since for any $T>0$, we have $\|h_T\| = \frac{|\bar{J}-J(T)|}{\|f(T)\|} \leq 2\tilde{C} |J(u)|_\infty$, this shows that $\sup\limits_{T>0}\|h_T\| \leq 2\tilde{C}|J(u)|_\infty$ is bounded for a continuous functional $J$ on a compact attractor. 
\end{proof}

\section{The stabilized march approach}
\label{sec:stabilized_march}
The boundary conditions stated in \cref{thm:stabilized_march_2a_T} can be used to solve for the adjoint solution using the stabilized march approach \cite{ascher_bvp}, which we now present. For the ease of derivations that follow, we keep $T$ fixed and drop the subscript $T$ from the adjoint solution $\psi_T$. \cref{thm:stabilized_march_2a_T} and \cref{thm:h_T_choice} show that one can obtain the correct sensitivity as $T\to\infty$ by solving the following adjoint equation subject to the boundary conditions:
\begin{equation}
\label{eq:adjoint_bcs}
\begin{aligned}
      &\frac{d\psi}{dt} + f_u^*\psi + J_u = 0 \quad t\in[0\;\;T] \\
      &\psi(T) = Q_T^+a_T + \left( \bar{J}-J(T) \right)\frac{f(T)}{\|f(T)\|^2} \\
      &Q_0^{+T}\psi(0) = a_0
\end{aligned}
\end{equation}
where $a_T$ is unknown and needs to be solved, whereas $a_0$ is a specified bounded vector, the choice of which is presented later. To prevent the unbounded growth of roundoff errors, we use the stabilized march approach \cite[Section 4.4.3]{ascher_bvp}, which divides the time interval $[0\;\;T]$ into $K$ segments: $0 = t_0 <t_1<...<t_{K-1}<t_K=T$. On each segment $[t_{i-1},t_i]$, $i=K,..,1$, we can express the solution using the principle of superposition as
\begin{equation}
\label{eq:adjoint_superposition}
    \psi(t) = Y_i(t)a_i + v_i(t), \quad t\in[t_{i-1},t_i]\quad i=K,..,1
\end{equation}
where $Y_i(t) \in \mathbb{R}^{n\times n_u}$ is a fundamental homogeneous solution satisfying the homogeneous adjoint equation
\begin{equation}
\label{eq:hom_adjoint}
    \frac{dY_i(t)}{dt} + f_u^*Y_i(t) = 0, \quad Y_i(t_i) = Q_i, \quad t\in[t_{i-1},t_i], \quad i=K,..,1,
\end{equation}
and $v_i(t) \in \mathbb{R}^n$ is a particular solution satisfying
\begin{equation}
\label{eq:nonhom_adjoint}
  \frac{dv_i(t)}{dt} + f_u^*v_i(t) +J_u = 0, \quad v_i(t_i) = \gamma_i, \quad t\in[t_{i-1},t_i], \quad i=K,..,1.
\end{equation}
At $t=t_K=T$, we choose $Y_K(t_K) = Q_T^+$ and $v_K(t_K) = \left( \bar{J}-J(T) \right)\frac{f(T)}{\|f(T)\|^2}$. It can be seen from \cref{eq:adjoint_superposition} that these choices satisfy the boundary conditions on $\psi(t)$ presented in \cref{eq:adjoint_bcs}. At $t=t_{i-1}$, we have two representations of the adjoint solution $\psi$:
\begin{equation}
\label{eq:adjoint_continuity}
    \psi(t_{i-1}) = Y_i(t_{i-1})a_i + v_i(t_{i-1}) = Y_{i-1}(t_{i-1})a_{i-1} + v_{i-1}(t_{i-1}).
\end{equation}
The terminal conditions $Y_{i-1}(t_{i-1})$ and $v_{i-1}(t_{i-1})$ are chosen to ensure continuity of the solution. This can be done as follows: we compute the QR decomposition $Y_i(t_{i-1}) = Q_{i-1}R_{i-1}$ and set $Y_{i-1}(t_{i-1}) = Q_{i-1}$ to ensure that the $\text{span}\{Y_i(t_{i-1})\} = \text{span}\{Y_{i-1}(t_{i-1})\}$. The terminal value of $v_{i-1}$ is chosen to be
\begin{equation*}
    \gamma_{i-1} = \left(I-Q_{i-1}Q_{i-1}^T\right) v_i(t_{i-1}),
\end{equation*}
which makes $\gamma_{i-1}$ orthogonal to $Q_{i-1}$. We note that a similar procedure was applied in the non-intrusive least squares shadowing approaches \cite{nilss,nilsas,blonigan_adjoint_nilss} before solving the least squares minimization problem in the unstable subspace. 

Left multiplying $Q_{i-1}^T$ in the second equality in the continuity condition, \cref{eq:adjoint_continuity}, yields a relation between $a_i$ and $a_{i-1}$:
\begin{equation}
\label{eq:eqn_in_R}
    R_{i-1}a_i = b_{i-1} + a_{i-1}\quad i=1,..,K;
\end{equation}
where $b_{i-1} = -Q_{i-1}^Tv_i(t_{i-1})$ and
$a_0$ can be chosen to be any fixed vector. In the current work, we set it to $a_0=0$. The unknowns $a_1,a_2,\dots,a_K$ can then be solved sequentially from $i=1$ through $K$ using \cref{eq:eqn_in_R}.
Since each $R_{i-1}$ is upper triangular, the linear system in \cref{eq:eqn_in_R} can be solved directly by backward substitution. The implementation is presented in \cref{alg:m_equals_nu} for the case where the number of homogeneous solutions is exactly equal to the dimension of the unstable subspace. We note that a similar approach of marching has been used in a modified NILSAS for chaotic maps \cite{ni_fast} to solve the linear space-time system efficiently. The current work, on the other hand, accounts for the neutral adjoint subspace and implements marching for the case of flows (continuous-in-time dynamical systems), which have applications in various fields.

We note that \cref{alg:m_equals_nu} would diverge if the number of homogeneous adjoint solutions is greater than the dimension of the unstable subspace. Since the dimension of the unstable subspace is not known in advance, it is preferable to have a method that works when we overpredict the number of unstable modes. The next section presents a way to apply the algorithm for the case $Y_i(t) \in \mathbb{R}^{n\times m}$, where $m>n_u$. 

\begin{algorithm}
\caption{Adjoint shadowing march when $m=n_u$}
\label{alg:m_equals_nu}
\begin{algorithmic}[1]
\STATE{Set integration time $T$ and the spin-up times $T_{0i}$ and $T_{0f}$.

\textbf{Primal solve:}}
\STATE{Choose a random initial condition $u_{-1}$ and solve the primal equation from $t=-T_{0i}$ to $t=0$ to get $u_0$ on the attractor.}
\STATE{Solve the primal equation from $t=0$ to $t=T+T_{0f}$ and save the primal solution at all time steps or at discrete checkpoints of time. 

\textbf{Adjoint solve:}}
\STATE{Choose a random matrix $W\in\mathbb{R}^{n\times m}$ such that the set of its columns and $f(T+T_{0f})$ is linearly independent. Compute $QR$ decomposition of the augmented matrix $[f(T+T_{0f}),\;W]$ and remove the first column of $Q$ to get $\tilde{Q}$.}
\STATE{Solve the homogeneous adjoint equation \cref{eq:hom_adjoint} from $t=T+T_{0f}$ to $t=T$ with the terminal condition $\tilde{Q}$ and factorizations at segments of length $\Delta T$. Save the $Q$ of the final homogeneous adjoint at $t=T$ as $Q_K$. }
\STATE{
Set $\gamma_K = \left(\bar{J}-J\right)\frac{f(T)}{\|f(T)\|^2}$.
}
\FOR{$i=K,K-1,..,1$}
\STATE{Set $Y_i(t_i) = Q_i$. Integrate \cref{eq:hom_adjoint} backward in time until $t=t_{i-1}$. Compute the QR decomposition $Y_i(t_{i-1}) = Q_{i-1}R_{i-1}$ and save $Q_{i-1}$ and $R_{i-1}$.}
\STATE{Set $v_i(t_i) = \gamma_i$ and integrate \cref{eq:nonhom_adjoint} backward in time until $t=t_{i-1}$.}
\STATE{Save $b_{i-1} = -Q_{i-1}^Tv_i(t_{i-1})$.}
\STATE{Set $\gamma_{i-1} = v_i(t_{i-1}) + Q_{i-1}b_{i-1}$.}
\STATE{Save $d_i = \int_{t_{i-1}}^{t_i} Y_i(t)^Tf_s(t)dt$ and $h_i = \int_{t_{i-1}}^{t_i} v_i(t)^Tf_s(t)dt $.}

\ENDFOR

\textbf{Forward march:}

\STATE{Set $a_0=0$.}
\FOR{$i=1,2,..,K$}
\STATE{Solve the upper triangular linear system, $R_{i-1}a_{i} = a_{i-1}+b_{i-1}$, for $a_i$ by backward substitution.}

\ENDFOR

\textbf{Compute sensitivity:}

\STATE{
Sensitivity $\frac{d\bar{J}}{ds} = \frac{1}{T}\left(\sum\limits_{i=1}^K \left(a_i^Td_i + h_i\right) + \int_0^T J_s dt\right)$
}
\RETURN $\frac{d\bar{J}}{ds}$
\end{algorithmic}
\end{algorithm}

\subsection{Adjoint stabilized march when $\mathbf{m>n_u}$}
\label{sec:m_greaterthan_nu}
When we perform the $QR$ decomposition of homogeneous adjoint solutions $Y_i$, after a sufficient number of time steps, the first $n_u$ columns of $Q$ span the unstable adjoint subspace \cite{benettin2}. On the other hand, the upper-triangular matrices $R_i \in \mathbb{R}^{m\times m}$ can be used to compute the Lyapunov exponents of the subspaces, from the highest to the lowest exponents. In particular, one can compute the finite-time Lyapunov exponents as \cite{benettin2}
\begin{equation}
\label{eq:compute_lyapunov_exponents}
    \lambda_j = \frac{1}{T} \sum_{i=1}^K \ln(|[R_{i}]_{jj}|) \quad j=1,\dots,m
\end{equation}
where $[R_i]_{jj}$ is the entry in the $j^{\text{th}}$ diagonal of $R_i$. Having computed $\lambda_1\geq \lambda_2\geq \dots\geq \lambda_m$, we choose $j^*$ such that $\lambda_{j^*}>0$ and $\lambda_{j^*-1} \leq 0$. This gives us the dimension of the unstable subspace as $n_u = j^*$.

The above fact leads to the next result, which shows that \cref{eq:eqn_in_R} can be split into a convenient form when $m>n_u$.

\begin{theorem}
\label{thm:split_stabilized_march}
    For each $i=1,\dots,K$, let $Q_i \in \mathbb{R}^{n \times m}$ be the orthonormal matrix specified in \cref{alg:m_equals_nu}. Let $Q_i = [Q_i^u,\;\;Q_i^{s^0}]$, where $Q_i^u \in \mathbb{R}^{n\times n_u}$ contains the first $n_u$ columns of $Q_i$ and $Q_i^{s^0}$ contains the remaining columns. Similarly, let $a_i = [a_i^u,\;\;a_i^{s^0}]^T$, where $a_i^u \in \mathbb{R}^{n_u}$ and $a_i^{s^0} \in \mathbb{R}^{m-n_u}$. If $a_i$ satisfies \cref{eq:eqn_in_R}, the set $\{a_i^{s^0}\}_{i=1}^k$ is independent of $\{a_i^{u}\}_{i=1}^k$.    
\end{theorem}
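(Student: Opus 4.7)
The plan is to exploit the upper-triangular structure of $R_{i-1}$ combined with the column partition of $Q_{i-1}$ that separates the unstable block (first $n_u$ columns) from the stable/neutral block. Once I write everything in block form, the triangular structure will zero out precisely the entries that would couple $a_i^{s^0}$ back to $a_i^u$.

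Concretely, I would first write $R_{i-1}$ compatibly with the partition $(a_i^u, a_i^{s^0})$:
\begin{equation*}
R_{i-1} = \begin{pmatrix} R_{i-1}^{uu} & R_{i-1}^{us^0} \\ 0 & R_{i-1}^{s^0 s^0} \end{pmatrix},
\end{equation*}
where the zero $(m-n_u)\times n_u$ block is forced by the upper-triangularity of $R_{i-1}$. Next I would partition $b_{i-1} = -Q_{i-1}^T v_i(t_{i-1})$ by the corresponding rows, giving $b_{i-1}^u = -(Q_{i-1}^u)^T v_i(t_{i-1})$ and $b_{i-1}^{s^0} = -(Q_{i-1}^{s^0})^T v_i(t_{i-1})$. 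The crucial observation for the argument is that the particular solutions $v_i$ are generated by an independent backward recurrence, \cref{eq:nonhom_adjoint} with terminal data $\gamma_{i-1} = (I - Q_{i-1}Q_{i-1}^T)v_i(t_{i-1})$, so $v_i(t_{i-1})$, and therefore $b_{i-1}$, carries no dependence on any of the $a_j$.

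Substituting the block decompositions into $R_{i-1}a_i = b_{i-1} + a_{i-1}$ yields the coupled system
\begin{align*}
R_{i-1}^{uu} a_i^u + R_{i-1}^{us^0} a_i^{s^0} &= b_{i-1}^u + a_{i-1}^u, \\
R_{i-1}^{s^0 s^0} a_i^{s^0} &= b_{i-1}^{s^0} + a_{i-1}^{s^0}.
\end{align*}
The second block row is a self-contained recurrence in the stable/neutral components alone. Iterating it from $a_0^{s^0}$ (which in \cref{alg:m_equals_nu} is simply $0$, but in general is a fixed piece of the prescribed $a_0$) determines $\{a_i^{s^0}\}_{i=1}^K$ solely from the data $\{R_{i-1}^{s^0 s^0}, b_{i-1}^{s^0}\}$, none of which involve the unstable coefficients $\{a_j^u\}$. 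That gives the claimed independence.

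There is no real technical obstacle here; the proof is essentially a bookkeeping exercise. The only point that merits care in the write-up is emphasizing that $v_i$ and hence $b_{i-1}$ are generated by a recurrence that is upstream of, and decoupled from, the recurrence \cref{eq:eqn_in_R} for $\{a_i\}$, so the independence statement is genuinely about the structure of the linear system itself and not an artifact of a particular ordering of computations.
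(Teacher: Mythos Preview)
Your proposal is correct and follows essentially the same approach as the paper: block-partition the upper-triangular $R_{i-1}$ compatibly with the $(a_i^u, a_i^{s^0})$ split, observe that the lower-left block vanishes, and read off that the second block row is a closed recurrence in $a_i^{s^0}$ alone. Your additional remark that $b_{i-1}$ is determined by the $v_i$-recurrence and hence carries no dependence on any $a_j$ is a worthwhile clarification that the paper leaves implicit.
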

\begin{proof}
     With the above notation, $R_i = \begin{bmatrix}
        R_i^u & R_i^{us^0} \\
              & R_i^{s^0}
    \end{bmatrix}$, where $R_i^u \in \mathbb{R}^{n_u\times n_u}$ and $ R_i^{s^0} \in \mathbb{R}^{(m-n_u)\times(m-n_u)}$ are upper triangular. Thus, \cref{eq:eqn_in_R} can be expressed as:
    \begin{equation*}
        \begin{bmatrix}
        R_{i-1}^u & R_{i-1}^{us^0} \\
              & R_{i-1}^{s^0}
    \end{bmatrix} \begin{bmatrix}
    a_i^{u} \\
    a_i^{s^0} 
    \end{bmatrix} = \begin{bmatrix}
    b_{i-1}^{u} \\
    b_{i-1}^{s^0} 
    \end{bmatrix} + \begin{bmatrix}
    a_{i-1}^{u} \\
    a_{i-1}^{s^0} 
    \end{bmatrix},  \quad \text{for } i=1,\dots,K,
    \end{equation*}
    from which we see that the equation for $a_i^{s^0}$,
    \begin{equation}
    \label{eq:ai_s}
         a_{i-1}^{s^0} = R_{i-1}^{s^0}  a_{i}^{s^0} -  b_{i-1}^{s^0}, \quad i=K,\dots,1,
    \end{equation}
    can be solved independently of $a_i^u$ and hence does not depend on $a_i^u$.
\end{proof}

Having computed $a_i^{s^0}$, one obtains $a_i^u$ by solving
\begin{equation}
\label{eq:ai_u}
    R_{i-1}^u a_i^u = b_{i-1}^u + a_{i-1}^u - R_{i-1}^{us^0}a_i^{s^0},\quad i=1,\dots,K.
\end{equation}
For stability, \cref{eq:ai_s} needs to be iterated backward from $i=K$ to $i=1$ since it involves the stable and the neutral subspaces. Moreover, specifying the terminal condition as $a_K^{s^0}=0$ ensures that the condition $f(t_K)^T\psi(t_K) = \bar{J}-J(t_K)$ is satisfied, provided $\gamma_K$ is chosen as specified in \cref{alg:m_equals_nu}. Once $a_i^{s^0}$ is solved for, one can solve \cref{eq:ai_u} forward from $i=1$ to $i=K$ with the initial condition $a_0^u = 0$. \cref{alg:m_>_nu} summarizes the stabilized march algorithm for the case $m>n_u$. 

\begin{algorithm}
\caption{Adjoint shadowing march when $m> n_u$}
\label{alg:m_>_nu}
\begin{algorithmic}[1]
\STATE{Execute until line 13 in \cref{alg:m_equals_nu}, storing $d_i,h_i,Q_i,$ and $R_i$.

\textbf{Compute dimension of the unstable subspace, $n_u$:}
}
\STATE{ Set $j = 1,\;n_u=0$ and compute $\lambda_1 = \frac{1}{T} \sum_{i=1}^K \ln(|[R_{i}]_{11}|)$. }
\WHILE{$\lambda_j > 0$}
\STATE{Set $n_u=n_u+1$.}
\STATE{Set $j=j+1$ and compute $\lambda_j = \frac{1}{T} \sum_{i=1}^K \ln(|[R_{i}]_{jj}|)$.}

\ENDWHILE

\STATE{Using the known values $m$ and $n_u$, identify the splitting $Q_i = [Q_i^u\;\;Q_i^{s^0}]$, $R_i =  \begin{bmatrix}
        R_i^u & R_i^{us^0} \\
              & R_i^{s^0}
    \end{bmatrix} $ and $b_i = [b_i^u,\;\;b_i^{s^0}]^T$ as outlined in \cref{sec:m_greaterthan_nu}.

    \vspace{2mm}
\textbf{Backward march:}    
}

\STATE{Set $a_K^{s^0}=0 \in \mathbb{R}^{m-n_u}$.}
\FOR{$i=K,K-1,\dots,1$}
\STATE{Compute $a_{i-1}^{s^0} = R_{i-1}^{s^0}  a_{i}^{s^0} -  b_{i-1}^{s^0}$.
}
\ENDFOR

\textbf{Forward march:}

\STATE{Set $a_0^u = 0 \in \mathbb{R}^{n_u}$.}
\FOR{$i=1,2,\dots,K$}
\STATE{Solve the triangular system $R_{i-1}^u a_i^u = b_{i-1}^u + a_{i-1}^u - R_{i-1}^{us^0}a_i^{s^0}$.}
\ENDFOR

\textbf{Compute sensitivity:}
\STATE{Concatenate $a_i = [a_i^u\;\;a_i^{s^0}]^T$.}

\STATE{
Compute $\frac{d\bar{J}}{ds} = \frac{1}{T}\left(\sum\limits_{i=1}^K \left(a_i^Td_i + h_i\right) + \int_0^T J_s dt\right)$
}
\RETURN $\frac{d\bar{J}}{ds}$
\end{algorithmic}
\end{algorithm}

\subsection{Cost analysis:}
With $n_u$ known, the stabilized march approach in \cref{alg:m_equals_nu} involves solving $n_u$ homogeneous and $1$ non-homogeneous adjoint equations, along with $1$ primal governing equation. This is similar to the existing NILSS and NILSAS approaches, with the difference that the current approach requires $n_u$ homogeneous adjoint solutions as opposed to $n_u +1$ in NILSAS for computing the neutral subspace. 

On the other hand, each $n_u \times n_u$ triangular solve in the stabilized march approach involves $n_u^2$ operations for backward substitution, along with $n_u$ operations to add the right-hand side of \cref{eq:eqn_in_R}. Therefore, the total cost of solving the linear system with the stabilized march approach is $K(n_u^2+n_u)$, where $K$ is the number of segments for QR decompositions.      

Least squares minimization based approaches, such as NILSS and NILSAS, require solving a KKT linear system. Each matrix-vector product with the KKT matrix involves at least $K$ matrix-vector products with dense matrices of size $n_u \times n_u$, $2(K-1)$ matrix-vector products with upper triangular matrices $R_i$ and $2n_u(K-1)$ additions \cite{nilsas}. Hence, the operation count of a single KKT matrix-vector product is $4Kn_u^2 + 2n_u(K-1)$. If $N_{\text{iter}}$ matrix-vector products are required to solve the KKT system, the total cost would be $N_{\text{iter}}\left(4Kn_u^2 + 2n_u(K-1)\right)$, with $N_{\text{iter}}$ depending on the conditioning of the KKT system or its Schur complement. Since $K\geq 1$ and $n_u \geq 1$, we note that even in the extreme case of $N_{\text{iter}}=1$, the cost of solving the linear system with the stabilized march is significantly less than solving the minimization problem.    

\section{Errors due to discretization}
\label{sec:err_due_to_discretization}
We note that a major difference in the current approach compared to the least squares NILSAS \cite{nilsas} approach was to replace the condition on the neutral adjoint subspace $\frac{1}{T}\int_0^T\psi^Tfdt=0$ by $\psi(T)^Tf(T) = \bar{J}-J(T)$ (\cref{lm:adjoint_particular_time}) and to convert the linear space-time system into a sequence of triangular solves. While the two conditions are equivalent for the continuous-time case, they do not necessarily hold after discretizing the adjoint equations. In the current section, by stating that a term $\tau$ is of order $\mathcal{O}(\Delta t^k)$, we mean $|\tau|\leq C^*\Delta t^k$ for a constant $C^*$. We also take $T$ to be fixed. Typically, for an initial value problem, discretization by a scheme of order $\mathcal{O}(\Delta t^k)$ results in an error in the solution of $\mathcal{O}(\Delta t^k)$ if the initial solution is exact. In the current case, however, the discrete adjoint solution is not equal to the continuous adjoint solution at $t=0$ or $t=T$. 
This is because the unstable, stable and neutral subspaces of the discrete adjoint equation are not the same as the subspaces of the continuous adjoint equation. 
Hence, it is of interest to investigate the error due to discretization. As the next few results show, the error due to discretization in the neutral adjoint subspace is of the same order as that in the stable and the unstable subspaces. 

\begin{lemma}
\label{thm:lyapunov_product_error}
    Let the interval $[0,\;T]$ be discretized using time step of size $\Delta t$, yielding $t_i = i\Delta t,\;i=0,\dots,N$. Assume that the integrations involved in solving the adjoint equations in \cref{alg:m_equals_nu} and \cref{alg:m_>_nu} are of order $\Delta t^k$. Let $\psi_i$ be the discrete solution obtained from the discretized stabilized march and let $\psi^\infty(t)$ be the adjoint shadowing direction of the continuous-time adjoint equation. If $\{\phi_j(u)\}_{j=1}^n$ is the set of Lyapunov covariant vectors of the continuous-time equation with the Lyapunov exponents $\{\lambda_j\}_{j=1}^n$, the error at $t_i$, $\xi_i := \psi_i - \psi^\infty(t_i)$, satisfies
    \begin{equation}
    \label{eq:lyapunov_err}
        \phi_j(t_{i-1})^T\xi_{i-1} = e^{\lambda_j \Delta t}\phi_j(t_{i})^T\xi_{i} + \tau_{i-1}, \quad i=1,\dots,N
    \end{equation}
    where $|\tau_{i-1}| \leq C\Delta t^{k+1}$.
\end{lemma}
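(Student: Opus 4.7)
The plan is to combine a one-step local-truncation analysis of the scheme with the defining ODE of the covariant Lyapunov vectors. First I would introduce the continuous homogeneous backward adjoint propagator $\Psi(t_{i-1},t_i)$, defined by $\frac{d\Psi}{dt}=-f_u^*\Psi$ with $\Psi(t_i,t_i)=I$, and write one step of the exact adjoint shadowing direction as
\begin{equation*}
\psi^\infty(t_{i-1}) = L_i\,\psi^\infty(t_i) + g_i + \sigma_i,
\end{equation*}
where $L_i$ is the discrete homogeneous one-step operator, $g_i$ is the scheme's approximation to the forcing integral $\int_{t_{i-1}}^{t_i}\Psi(t_{i-1},\tau)J_u(\tau)\,d\tau$, and $\sigma_i$ is the per-step local truncation error, which is bounded by $C_1\Delta t^{k+1}$ because the scheme is of order $k$. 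Since the discrete solution satisfies $\psi_{i-1}=L_i\psi_i+g_i$ exactly, subtracting the two relations yields the error recurrence $\xi_{i-1}=L_i\xi_i-\sigma_i$.

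Next, I would use consistency to split the discrete propagator as $L_i=\Psi(t_{i-1},t_i)+E_i$ with $\|E_i\|\leq C_2\Delta t^{k+1}$. Integrating the CLV ODE $\frac{d\phi_j}{dt}=(f_u-\lambda_j I)\phi_j$ over $[t_{i-1},t_i]$ gives $\phi_j(t_i)=e^{-\lambda_j\Delta t}M(t_i,t_{i-1})\phi_j(t_{i-1})$, where $M(t_i,t_{i-1})$ is the tangent propagator. A direct computation of $\tfrac{d}{dt}\langle\psi,w\rangle$ for $\psi$ solving the homogeneous adjoint and $w$ solving the tangent equation shows $M(t_i,t_{i-1})^T=\Psi(t_{i-1},t_i)$, and transposing then yields the key identity
\begin{equation*}
\phi_j(t_{i-1})^T\,\Psi(t_{i-1},t_i) = e^{\lambda_j\Delta t}\,\phi_j(t_i)^T.
\end{equation*}

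Left-multiplying the error recurrence by $\phi_j(t_{i-1})^T$ and substituting this identity produces
\begin{equation*}
\phi_j(t_{i-1})^T\xi_{i-1} = e^{\lambda_j\Delta t}\phi_j(t_i)^T\xi_i + \phi_j(t_{i-1})^T E_i\xi_i - \phi_j(t_{i-1})^T\sigma_i.
\end{equation*}
Setting $\tau_{i-1}$ equal to the sum of the last two remainder terms, the bound $|\tau_{i-1}|\leq C\Delta t^{k+1}$ follows by combining the uniform bound on $\|\phi_j\|$ on the compact attractor with a uniform-in-$i$ bound on $\|\xi_i\|$, together with the two $\mathcal{O}(\Delta t^{k+1})$ estimates on $\|E_i\|$ and $\|\sigma_i\|$.

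The hard part, I anticipate, will be justifying the uniform-in-$\Delta t$ and uniform-in-$i$ bound on $\|\xi_i\|$. While $\|\psi^\infty(t_i)\|$ is bounded by the adjoint shadowing construction, the discrete iterates $\psi_i$ live in the unstable, stable, and neutral subspaces of the \emph{discrete} dynamics, which agree with their continuous counterparts only up to a discretization error. One should therefore argue, either by a discrete analog of the boundedness estimates established in \cref{thm:stabilized_march_2a_T} or by closing the recursion from the one-step estimate derived above, that $\|\psi_i\|$, and hence $\|\xi_i\|$, remains $O(1)$ uniformly in $\Delta t$ and $i$ for $T$ fixed.
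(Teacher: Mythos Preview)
Your argument is correct and uses the same two ingredients as the paper---the one-step local truncation error and the CLV propagation identity $\phi_j(t_{i-1})^T\Psi(t_{i-1},t_i)=e^{\lambda_j\Delta t}\phi_j(t_i)^T$---but you organize the bookkeeping differently. The paper writes the discrete step as the \emph{exact} one-step map from $\psi_i$ plus the local error, i.e.\ $\psi_{i-1}=\Psi(t_{i-1},t_i)\psi_i+(\text{exact forcing integral})+\tilde{\tau}_{i-1}$ with $\|\tilde{\tau}_{i-1}\|\le\tilde{C}\Delta t^{k+1}$. Subtracting the exact relation for $\psi^\infty$ then gives $\xi_{i-1}=\Psi(t_{i-1},t_i)\xi_i+\tilde{\tau}_{i-1}$ directly, so after projecting onto $\phi_j(t_{i-1})$ only the single remainder $\tau_{i-1}=\phi_j(t_{i-1})^T\tilde{\tau}_{i-1}$ appears, bounded using nothing but the uniform bound on the CLVs. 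Your route---expressing $\psi^\infty$ through the discrete propagator $L_i$ and then splitting $L_i=\Psi+E_i$---reaches the same conclusion but manufactures an extra term $\phi_j^TE_i\xi_i$, which is precisely what forces you to control $\|\xi_i\|$.

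That said, your concern about a uniform bound on $\|\xi_i\|$ (equivalently $\|\psi_i\|$) is not truly circumvented by the paper's organization: for a linear ODE the local error decomposes as $\tilde{\tau}_{i-1}=(L_i-\Psi)\psi_i+(\text{forcing error})$, so the constant $\tilde{C}$ tacitly depends on $\|\psi_i\|$. The paper simply absorbs this into the standing hypothesis that the scheme is of order $\Delta t^k$, and later invokes boundedness of the stabilized-march iterates separately when it is needed downstream (cf.\ the proof of \cref{thm:error_nonneutral_subspace}). So the paper's reorganization buys a cleaner remainder with fewer moving parts, not an escape from the boundedness issue you flagged.
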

\begin{proof}
    Taking $\psi_i$ as the terminal condition at $t=t_i$ and performing exact integration of the adjoint equation, \cref{eq:conventional_adjoint2}, until $t_{i-1}$ yields:
    \begin{equation*}
        \psi(t_{i-1},\psi_i) = e^{-\int_{t_i}^{t_{i-1}} f_u^*d\tau}\psi_i + \int_{t_i}^{t_{i-1}} e^{-\int_{\tau}^{t_{i-1}} f_u^*d\xi}J_u(\tau) d\tau.
    \end{equation*}
    Since $\psi_{i-1}$ and $\psi_i$ solve the discrete adjoint equation of order $\Delta t^k$, the local error, $\tilde{\tau}_{i-1}$, is given by
    \begin{equation*}
        \psi_{i-1} - \psi(t_{i-1}, \psi_i) = \tilde{\tau}_{i-1},
    \end{equation*}
    and is bounded by $\|\tilde{\tau}_{i-1}\|\leq \tilde{C}\Delta t^{k+1}$. Hence, we have
    \begin{equation}
    \label{eq:1ad}
        \psi_{i-1} = e^{-\int_{t_i}^{t_{i-1}} f_u^*d\tau}\psi_i + \int_{t_i}^{t_{i-1}} e^{-\int_{\tau}^{t_{i-1}} f_u^*d\xi}J_u(\tau) d\tau + \tilde{\tau}_{i-1}.
    \end{equation}

    Since the adjoint shadowing direction, $\psi^\infty$, satisfies the continuous-time adjoint equation, we have:
    \begin{equation}
    \label{eq:2ad}
        \psi^\infty(t_{i-1}) = e^{-\int_{t_i}^{t_{i-1}} f_u^*d\tau}\psi^\infty(t_i) + \int_{t_i}^{t_{i-1}} e^{-\int_{\tau}^{t_{i-1}} f_u^*d\xi}J_u(\tau) d\tau
    \end{equation}
    Subtracting \cref{eq:2ad} from \cref{eq:1ad} yields:
    \begin{equation}
    \label{eq:3ad}
        \xi_{i-1} = e^{-\int_{t_i}^{t_{i-1}} f_u^*d\tau} \xi_i + \tilde{\tau}_{i-1}.
    \end{equation}
    Taking the inner product of \cref{eq:3ad} with $\phi_j(t_{i-1})$ and denoting $\tau_{i-1} = \phi_j(t_{i-1})^T\tilde{\tau}_{i-1}$ gives
    \begin{equation}
    \label{eq:4ad}
        \phi_j(t_{i-1})^T\xi_{i-1} = \phi_j(t_{i-1})^Te^{-\int_{t_i}^{t_{i-1}} f_u^*d\tau} \xi_i + \tau_{i-1}.
    \end{equation}
Since $\phi_j(t)$ satisfies $\frac{d\phi_j}{dt} = f_u\phi_j - \lambda_j\phi_j$, we have
\begin{equation}
\label{eq:5ad}
    e^{\int_{t_{i-1}}^{t_{i}} f_u dt} \phi_j(t_{i-1}) = e^{\lambda_j \Delta t} \phi_j(t_i).
\end{equation}
Substituting \cref{eq:5ad} into the first term on the right hand side of \cref{eq:4ad} gives the stated result. Since $\phi_j$ are bounded at all times, $|\tau_{i-1}| = |\phi_j(t_{i-1})^T\tilde{\tau}_{i-1}| \leq C\Delta t^{k+1}$. 
\end{proof}

\begin{remark}
    By rearranging the terms in \cref{eq:lyapunov_err}, the error $\xi_i$ between the discrete adjoint $\psi_i$ and the adjoint shadowing direction $\psi^\infty(t_i)$ satisfies
    \begin{equation}
    \label{eq:7ad}
        \phi_j(t_{i})^T\xi_{i} = e^{-\lambda_j \Delta t}\phi_j(t_{i-1})^T\xi_{i-1} -e^{-\lambda_j \Delta t} \tau_{i-1}, \quad i=1,\dots,N.
    \end{equation}
\end{remark}

\begin{theorem}
\label{cor:err_neutral_subspace}
    Let $\psi_i$ be the discrete adjoint solution outlined in \cref{thm:lyapunov_product_error}. With the terminal condition of the stabilized march approach, $\psi_N^Tf_N = \bar{J}-J(t_N)$, we have
      \begin{equation}
      \label{eq:neutral_subsapce_error}
        \bigg|\frac{1}{T} \int_{i=0}^N f_i^T\psi_i dt\bigg| \leq \left(C_{n_0}^*+CT\right)\Delta t^k, 
    \end{equation}
    where $f_i=f(u_i)$ and $\int_{i=0}^N(\cdot)$ is an $\mathcal{O}(\Delta t^k)$ discretization of the integral $\int_0^T(\cdot)$.
\end{theorem}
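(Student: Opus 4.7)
The plan is to specialize the error propagation of \cref{thm:lyapunov_product_error} to the neutral Lyapunov direction. For $j=n_0$ we have $\lambda_{n_0}=0$ and $\phi_{n_0}=f$, so the recursion \cref{eq:lyapunov_err} collapses to
\begin{equation*}
f(t_{i-1})^T\xi_{i-1}=f(t_i)^T\xi_i+\tau_{i-1},\qquad |\tau_{i-1}|\leq C\Delta t^{k+1}.
\end{equation*}
Telescoping backward from $i=N$ yields $f(t_i)^T\xi_i=f(t_N)^T\xi_N+\sum_{j=i+1}^N\tau_{j-1}$, so the neutral-direction projection of the error at any step $i$ is controlled by $|f(t_N)^T\xi_N|$ plus an accumulated local-truncation contribution bounded by $(N-i)C\Delta t^{k+1}\leq CT\Delta t^k$.

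The crux of the proof is to show $f(t_N)^T\xi_N=0$. Along any solution of the continuous adjoint equation \cref{eq:conventional_adjoint2}, the quantity $\psi^Tf+J$ is conserved in time---the same chain-rule calculation that underlies \cref{lm:adjoint_particular_time}---and property (iii) of $\psi^\infty$ fixes the value of the conserved quantity to be $\bar J$, so $f(t)^T\psi^\infty(t)+J(t)=\bar J$ for all $t\geq 0$. Evaluating at $t_N$ and comparing with the prescribed terminal condition $\psi_N^Tf_N=\bar J-J(t_N)$ gives $f_N^T\xi_N=0$, and the telescoped estimate above then produces $|f_i^T\xi_i|\leq CT\Delta t^k$ uniformly in $i$.

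To finish, I would decompose $f_i^T\psi_i=f_i^T\xi_i+f_i^T\psi^\infty(t_i)=f_i^T\xi_i+(\bar J-J(t_i))$ and take the discrete time average:
\begin{equation*}
\frac{1}{T}\int_{i=0}^N f_i^T\psi_i\,dt=\frac{1}{T}\int_{i=0}^N f_i^T\xi_i\,dt+\bar J-\frac{1}{T}\int_{i=0}^N J(t_i)\,dt.
\end{equation*}
The first right-hand-side term is bounded by $CT\Delta t^k$ from the uniform control above. The second is bounded by $C_{n_0}^*\Delta t^k$ because the stated $\mathcal{O}(\Delta t^k)$ quadrature approximates the continuous time average of $J$ (which in the setting in which the terminal condition is imposed equals $\bar J$) to that accuracy, with $C_{n_0}^*$ absorbing the boundedness of $J$ on the compact attractor $\Lambda$. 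Summing the two pieces gives the advertised $(C_{n_0}^*+CT)\Delta t^k$ bound. The main obstacle is really the anchor identity $f_N^T\xi_N=0$; after that is in hand everything else is a routine propagation-plus-quadrature estimate, with all constants independent of $\Delta t$ because $f$, $J$, and the Lyapunov vectors are continuous on the compact attractor $\Lambda$.
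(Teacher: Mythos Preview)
Your proposal is correct and follows essentially the same approach as the paper: specialize \cref{thm:lyapunov_product_error} to the neutral direction, use the terminal condition to anchor $f_N^T\xi_N=0$, telescope to control $|f_i^T\xi_i|$, and finish by bounding the discrete average of $f_i^T\psi^\infty(t_i)$ via quadrature accuracy. The only cosmetic difference is that where the paper invokes property (iii) of $\psi^\infty$ directly (the integral $\frac{1}{T}\int_0^T f^T\psi^\infty\,dt=0$ is zero, so its $\mathcal{O}(\Delta t^k)$ quadrature is $\leq C_{n_0}^*\Delta t^k$), you pass through the equivalent pointwise identity $f^T\psi^\infty=\bar J-J$ from \cref{lm:adjoint_particular_time} and then bound $\bar J-\frac{1}{T}\int_{i=0}^N J(t_i)\,dt$; both routes rest on the same fact and the same implicit identification of $\bar J$ with the finite-$T$ average used in the algorithm.
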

\begin{proof}
    The neutral Lyapunov covariant vector, corresponding to $\lambda_{n_0}=0$, is $\phi_{n_0}(t_i) = f_i$. Thus, \cref{thm:lyapunov_product_error} yields:
    \begin{equation*}
        f_{i-1}^T\xi_{i-1} = f_{i}^T\xi_i + \tau_{i-1}. \quad i=1,\dots,N.
    \end{equation*}
    The terminal condition of the stabilized march approach satisfies $f_N^T\xi_N = 0$. Therefore,
    \begin{equation*}
        |f_{N-q}^T\xi_{N-q}| = \bigg|\sum_{p=1}^q \tau_{N-p} \bigg| \leq N C\Delta t^{k+1} \quad 1\leq q \leq N.
    \end{equation*}
    Using the fact that $N=\frac{T}{\Delta t}$, we have $|f_i^T\xi_i| \leq  CT\Delta t^k$ for $0\leq i\leq N$. Hence,
    \begin{equation}
        \label{eq:neutral_err_2}
        \bigg|\frac{1}{T} \int_{i=0}^N f_i^T\xi_i dt \bigg| \leq \frac{1}{T} \int_{i=0}^N |f_i^T\xi_i| dt \leq CT\Delta t^k.
    \end{equation}
    Using the triangular inequality along with \cref{eq:neutral_err_2} yields
    \begin{equation}
    \label{eq:error_triangle_neutral}
      \bigg| \frac{1}{T}\int_{i=1}^N f_i^T\psi_i dt \bigg| -\bigg| \frac{1}{T}\int_{i=1}^N f_i^T\psi^\infty(t_i)dt \bigg| \leq  \bigg|\frac{1}{T} \int_{i=0}^N f_i^T\xi_i dt \bigg| \leq CT\Delta t^k.
    \end{equation}
    Noting that $0=\frac{1}{T}\int_0^T f^T\psi^\infty dt = \frac{1}{T}\int_{i=1}^N f_i^T\psi^\infty(t_i)dt + \mathcal{O}(\Delta t^k)$, we have
    \begin{equation}
    \label{eq:error_neutral_inf}
        \bigg| \frac{1}{T}\int_{i=1}^N f_i^T\psi^\infty(t_i)dt \bigg| \leq C^{*}_{n_0}\Delta t^k
    \end{equation}
    for a constant $C^{*}_{n_0}$. Substituting \cref{eq:error_neutral_inf} into \cref{eq:error_triangle_neutral} yields the stated result.
\end{proof}

\begin{theorem}
\label{thm:error_nonneutral_subspace}
    Let $1\leq j\leq n$ and $j\neq n_0$. If $\psi_i$ is the adjoint solution obtained from the stabilized march approach and $\xi_i$ and $\phi_j(t)$ are as defined in \cref{thm:lyapunov_product_error}, we have the bound:
    \begin{equation}
    \label{eq:ad_bound}
        \bigg|\frac{1}{T} \int_{i=0}^N \phi_j(t_i)^T\xi_i dt  \bigg| \leq \frac{1}{T} \int_{i=0}^N |\phi_j(t_i)^T\xi_i |dt \leq \frac{\tilde{C}}{T |\lambda_j|} + \tilde{C}_j  \Delta t^k 
    \end{equation}
   for $0<\Delta t<\delta$, where $\tilde{C}, \;\tilde{C}_j$ and $\delta>0$ are constants. 
\end{theorem}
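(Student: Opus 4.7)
I split into two cases according to the sign of $\lambda_j$, using the forward recursion \cref{eq:7ad} for unstable directions ($\lambda_j>0$) and the backward recursion \cref{eq:lyapunov_err} for stable directions ($\lambda_j<0$). The idea is the same in both cases: iterate in the direction of contraction so that the anchor term decays geometrically and the accumulated local truncation errors form a convergent geometric series whose sum is of order $\Delta t^{k+1}/(1-e^{-|\lambda_j|\Delta t}) = \mathcal{O}(\Delta t^k/|\lambda_j|)$ for $\Delta t$ small enough.

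\textbf{Unstable case} ($\lambda_j>0$). Unrolling \cref{eq:7ad} from $i=0$ gives
\begin{equation*}
\phi_j(t_i)^T\xi_i = e^{-i\lambda_j\Delta t}\,\phi_j(t_0)^T\xi_0 \;-\; \sum_{q=0}^{i-1} e^{-(i-q)\lambda_j\Delta t}\,\tau_q,
\end{equation*}
so $|\phi_j(t_i)^T\xi_i| \leq e^{-\lambda_j t_i}|\phi_j(t_0)^T\xi_0| + C\Delta t^{k+1}/(1-e^{-\lambda_j\Delta t})$. For the anchor $\phi_j(t_0)^T\xi_0$ I use that $\psi^{\infty+}(0)=0$ (condition (ii) of the adjoint shadowing direction) combined with the duality \cref{eq:dot_product_lyapunov_vectors}, which forces $\phi_j(0)^T\psi^\infty(0)=0$ for $\lambda_j>0$; the discrete boundary condition $a_0=0$ gives the analogous vanishing up to the discretization error in $Q_0^+$, so in any case $|\phi_j(t_0)^T\xi_0|$ is bounded by a constant independent of $T$ and $\Delta t$. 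Choosing $\delta$ so that $1-e^{-\lambda_j\Delta t}\geq \lambda_j\Delta t/2$ for $\Delta t<\delta$, and then time-averaging,
\begin{equation*}
\frac{1}{T}\int_{i=0}^N |\phi_j(t_i)^T\xi_i|\,dt \;\leq\; \frac{|\phi_j(t_0)^T\xi_0|}{T}\sum_{i=0}^N e^{-\lambda_j t_i}\Delta t + \frac{2C}{\lambda_j}\Delta t^k \;\leq\; \frac{\tilde C}{T\lambda_j} + \tilde C_j\Delta t^k,
\end{equation*}
where the Riemann sum is bounded by $\int_0^\infty e^{-\lambda_j t}dt = 1/\lambda_j$.

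\textbf{Stable case} ($\lambda_j<0$). Iterate \cref{eq:lyapunov_err} backward from $i=N$:
\begin{equation*}
\phi_j(t_{N-q})^T\xi_{N-q} = e^{q\lambda_j\Delta t}\,\phi_j(t_N)^T\xi_N + \sum_{r=0}^{q-1} e^{r\lambda_j\Delta t}\,\tau_{N-r-1}.
\end{equation*}
For the anchor term I use that both $\psi_N$ and $\psi^\infty(t_N)$ are bounded by the hypothesis of \cref{thm:stabilized_march_1} (and by \cref{thm:h_T_choice} for the chosen $h_T$), so $|\phi_j(t_N)^T\xi_N| \leq \|\phi_j\|_\infty \|\xi_N\|$ is bounded independently of $T$. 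The rest of the argument is identical to the unstable case with $\lambda_j$ replaced by $|\lambda_j|$, yielding the bound $\tilde C/(T|\lambda_j|) + \tilde C_j\Delta t^k$.

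\textbf{Main obstacle.} The delicate point is the uniform boundedness of the anchor terms $\phi_j(t_0)^T\xi_0$ and $\phi_j(t_N)^T\xi_N$ independently of $T$ and $\Delta t$. In the stable case this is immediate from the boundary-condition analysis already performed in the proof of \cref{thm:stabilized_march_2a_T}. In the unstable case it requires knowing that the discrete $Q_0^+$ built from QR decompositions approximates the continuous unstable adjoint subspace well enough that $\phi_j(0)^T\psi_0$ remains $\mathcal{O}(1)$ (indeed small) as $\Delta t\to 0$; this relies on the same spin-up reasoning used to compute Lyapunov exponents via \cref{eq:compute_lyapunov_exponents}. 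Once these anchors are controlled by a constant independent of $T$, the two inequalities combine into the desired $\tilde C/(T|\lambda_j|) + \tilde C_j\Delta t^k$ bound, with $\delta$ chosen uniformly over the finite set $\{\lambda_j : j\neq n_0\}$.
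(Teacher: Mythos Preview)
Your proposal is correct and follows essentially the same route as the paper: split on the sign of $\lambda_j$, iterate the recursion from \cref{thm:lyapunov_product_error} in the contracting direction, bound the accumulated local errors by the geometric series $\Delta t^{k+1}/(1-e^{-|\lambda_j|\Delta t})=\mathcal{O}(\Delta t^k)$ for small $\Delta t$, and time-average the decaying anchor term to produce the $\tilde C/(T|\lambda_j|)$ contribution. The choice of a uniform $\delta$ over the finitely many $\lambda_j$ with $j\neq n_0$ is also the same.

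The one place you diverge from the paper is in what you flag as the ``main obstacle'': controlling the anchor terms $\phi_j(t_0)^T\xi_0$ and $\phi_j(t_N)^T\xi_N$. The paper does not invoke the orthogonality $\phi_j(0)^T\psi^\infty(0)=0$ or worry about how well the discrete $Q_0^+$ approximates the continuous unstable adjoint subspace. It simply observes that $\psi_0$ and $\psi_N$ are bounded by stability of the stabilized march (citing \cite{stabilized_march_stable,ascher_bvp}), and $\psi^\infty$ is bounded by construction; hence $\xi_0,\xi_N$ are bounded and, since $\phi_j$ is bounded on the attractor, $|\phi_j(t_0)^T\xi_0|\leq\tilde C$ and $|\phi_j(t_N)^T\xi_N|\leq\tilde C$ for a constant $\tilde C$. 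Your subspace-approximation argument is aiming for more than is needed (smallness rather than mere boundedness) and introduces a dependency on the quality of the discrete QR subspace that the proof does not actually require. Once you replace that paragraph with the paper's one-line boundedness argument, your proof matches the paper's.
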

\begin{proof}
    In general, $\xi_N \neq 0$ and $\xi_0 \neq 0$ since the discrete stabilized march approach uses the unstable modes of the discrete adjoint equation rather than the continuous adjoint equation. On the other hand, for a given $T$ and $K$, the solutions $\psi_0$ and $\psi_N$ from the stabilized march approach are bounded \cite{stabilized_march_stable,ascher_bvp}. Since $\psi^\infty(t)$ is also bounded, for any Lyapunov covariant vector $\phi_j(t)$, we have a constant $\tilde{C}$ such that $|\phi_j^T(t_0)\xi_0|\leq \tilde{C}$ and $|\phi_j^T(t_N)\xi_N|\leq \tilde{C}$. 

    Let $\phi_j(t)$ be the Lyapunov covariant vector corresponding to $\lambda_j < 0$. From \cref{thm:lyapunov_product_error}, iterating \cref{eq:lyapunov_err} backward from $i=N$ yields:
    \begin{equation}
    \label{eq:9ad}
          \phi_j(t_{N-q})^T\xi_{N-q} = e^{\lambda_j (T-t_{N-q})}\phi_j(t_{N})^T\xi_{N} + \sum_{p=1}^q \tau_{N-p} e^{(q-p)\lambda_j \Delta t}, \quad 1\leq q\leq N.
    \end{equation}
    Since $|\tau_i|\leq C\Delta t^{k+1}$ and $0<e^{\lambda_j\Delta t}<1$ because $\lambda_j<0$, the second term on the right hand side of \cref{eq:9ad} is bounded by 
    \begin{equation*}
        \sum_{p=1}^q \tau_{N-p} e^{(q-p)\lambda_j \Delta t} \leq C\Delta t^{k+1} \left(1+e^{\lambda_j\Delta t}+e^{2\lambda_j\Delta t}+\dots \right) \leq C\Delta t^{k+1} \left(\frac{1}{1-e^{\lambda_j\Delta t}}\right).
    \end{equation*}
    It can be seen that $\lim\limits_{\Delta t\to 0} \frac{\Delta t}{1-e^{\lambda_j\Delta t}} = -\frac{1}{\lambda_j}$. Therefore, $\exists \delta_j>0$ such that for $0<\Delta t<\delta_j$, $\big|\frac{\Delta t}{1-e^{\lambda_j\Delta t}} + \frac{1}{\lambda_j}\big| < 1 \implies \big|\frac{\Delta t}{1-e^{\lambda_j\Delta t}}\big| < 1+\frac{1}{|\lambda_j|}$ . Hence, noting that $|\phi_j(t_N)^T\xi_N|\leq \tilde{C}$, we have the bound for \cref{eq:9ad}:
    \begin{equation}
    \label{eq:8ad}
        |\phi_j(t_i)^T\xi_i| \leq e^{\lambda_j(T-t_i)} \tilde{C} + C\Delta t^k\left(1+\frac{1}{|\lambda_j|} \right), \quad 0<\Delta t<\delta_j,\quad 1\leq i\leq N.
    \end{equation}
    Since $\frac{1}{T|\lambda_j|} \geq \frac{e^{T\lambda_j}-1}{T\lambda_j}=\frac{1}{T}\int_0^T e^{\lambda_j (T-t)}dt = \frac{1}{T}\int_{i=0}^N e^{\lambda_j (T-t_i)}dt + \mathcal{O}(\Delta t^k)$, we also have
    \begin{equation}
\label{eq:discrete_err_1}
        \bigg|\frac{1}{T}\int_{i=0}^N e^{\lambda_j (T-t_i)}dt \bigg| \leq \frac{1}{T|\lambda_j|} + C'\Delta t^k  
    \end{equation}
    for a constant $C'$.
    Using \cref{eq:8ad,eq:discrete_err_1}, we have the bound
    \begin{equation}
    \label{eq:bound_dotproduct}
    \begin{aligned}
         \bigg|\frac{1}{T} \int_{i=0}^N \phi_j(t_i)^T\xi_i dt  \bigg| & \leq \frac{1}{T} \int_{i=0}^N |\phi_j(t_i)^T\xi_i |dt \\
         & \leq  \left(\frac{1}{T}\int_{i=0}^N e^{\lambda_j (T-t)}dt\right)\tilde{C} + C\Delta t^k\left(1+\frac{1}{|\lambda_j|} \right) \\
         & \leq \frac{\tilde{C}}{T |\lambda_j|} + \tilde{C}_j  \Delta t^k  
    \end{aligned}
    \end{equation}
    for $\lambda_j<0$ and $\delta_j>0$. Similarly, for $\lambda_j>0$, one can iterate \cref{eq:7ad} from $i=1$ to obtain $\phi_j(t_i)^T\xi_i = e^{-\lambda_j t_i}\phi_j(t_0)^T\xi_0 + \sum_{p=0}^{i-1}\tau_p e^{-\lambda_j\Delta t(N- p)}$. Since $\xi_0$ is bounded, repeating the same process as above shows that the bound in \cref{eq:bound_dotproduct} also holds for $\lambda_j >0$. Choosing $\delta= \inf\limits_{\substack{1\leq j\leq n\\j\neq n_0}} \{ \delta_j\}$ and noting that $\delta>0$ for finite $n$, we note that the bound in \cref{eq:ad_bound} holds for any $j\neq n_0$.
\end{proof}

\begin{corollary}
    Let $\frac{d\bar{J}}{ds}|_h = \frac{1}{T} \int_{i=0}^N \left(\psi_i^Tf_s(t_i) + J_s(t_i)\right) dt$ be the sensitivity computed using the discrete adjoint solution, $\psi_i$, to the stabilized march approach. Let $\frac{d\bar{J}^\infty}{ds} = \frac{1}{T} \int_{0}^T \left(\psi^{\infty T}(t)f_s(t) + J_s(t)\right) dt$ be the sensitivity from the adjoint shadowing direction. Then, 

    \begin{equation*}
        \bigg|\frac{d\bar{J}}{ds}|_h -  \frac{d\bar{J}^\infty}{ds} \bigg| \leq \frac{C_1}{T} + \left(C_2 + C_3 T\right) \Delta t^k, \quad\quad0<\Delta t < \delta,
    \end{equation*}
    with the constants $C_1$, $C_2$, $C_3$ and $\delta>0$. 
\end{corollary}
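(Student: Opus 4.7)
The plan is to reduce the statement to the two preceding bounds (\cref{cor:err_neutral_subspace} and \cref{thm:error_nonneutral_subspace}) by decomposing the error $\xi_i = \psi_i - \psi^\infty(t_i)$ along the adjoint covariant Lyapunov basis. First I would write
\begin{equation*}
    \frac{d\bar{J}}{ds}\bigg|_h - \frac{d\bar{J}^\infty}{ds} = \frac{1}{T}\int_{i=0}^N \xi_i^T f_s(t_i)\,dt + E_{\text{quad}},
\end{equation*}
where $E_{\text{quad}}$ collects the quadrature errors of approximating $\frac{1}{T}\int_0^T \psi^{\infty T}(t) f_s(t)\,dt$ and $\frac{1}{T}\int_0^T J_s(t)\,dt$ by the discrete sums. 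Because $\psi^\infty$, $f_s$, and $J_s$ are continuous on the compact attractor, the chosen $\mathcal{O}(\Delta t^k)$ quadrature immediately gives $|E_{\text{quad}}| \leq C_0 \Delta t^k$.

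The heart of the argument is then bounding $\frac{1}{T}\int_{i=0}^N \xi_i^T f_s(t_i)\,dt$. Using the biorthogonality \cref{eq:dot_product_lyapunov_vectors}, I would expand
\begin{equation*}
    \xi_i = \sum_{j=1}^n \bigl(\phi_j(t_i)^T \xi_i\bigr)\,\hat{\phi}_j(t_i),
\end{equation*}
so that $\xi_i^T f_s(t_i) = \sum_j \bigl(\phi_j(t_i)^T \xi_i\bigr)\bigl(\hat{\phi}_j(t_i)^T f_s(t_i)\bigr)$. Since each $\hat{\phi}_j$ is bounded on $\Lambda$ and $f_s$ is continuous on $\Lambda$, there exist constants $B_j$ with $|\hat{\phi}_j(t_i)^T f_s(t_i)| \leq B_j$. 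Hence
\begin{equation*}
    \bigg|\frac{1}{T}\int_{i=0}^N \xi_i^T f_s(t_i)\,dt\bigg| \leq \sum_{j=1}^n B_j \cdot \frac{1}{T}\int_{i=0}^N |\phi_j(t_i)^T \xi_i|\,dt.
\end{equation*}

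For $j=n_0$ I would invoke the interior estimate used in the proof of \cref{cor:err_neutral_subspace}, namely $|f_i^T\xi_i| \leq CT\Delta t^k$ pointwise, which yields a bound of the form $B_{n_0}CT\Delta t^k$ for that term. For every $j \neq n_0$, I apply \cref{thm:error_nonneutral_subspace} directly to get $\frac{1}{T}\int_{i=0}^N |\phi_j(t_i)^T \xi_i|\,dt \leq \tilde{C}/(T|\lambda_j|) + \tilde{C}_j \Delta t^k$, valid for $0 < \Delta t < \delta$. Summing over $j$ and absorbing constants produces exactly
\begin{equation*}
    \bigg|\frac{1}{T}\int_{i=0}^N \xi_i^T f_s(t_i)\,dt\bigg| \leq \frac{C_1}{T} + (C_2 + C_3 T)\Delta t^k,
\end{equation*}
with $C_1 = \tilde{C}\sum_{j\neq n_0} B_j/|\lambda_j|$, $C_2 = \sum_{j\neq n_0} B_j \tilde{C}_j$, and $C_3 = B_{n_0} C$. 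Adding $|E_{\text{quad}}|$ absorbs into the same form and completes the proof.

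The only real subtlety is bookkeeping: verifying that the quadrature error $E_{\text{quad}}$ is genuinely $\mathcal{O}(\Delta t^k)$ even though $\psi^\infty$ is only defined on the continuous trajectory (this follows because $\psi^\infty$ inherits smoothness from $f_u^*$ and $J_u$ along the shadow orbit, so the assumed $\mathcal{O}(\Delta t^k)$ quadrature rule applies), and confirming that the finiteness of $n$ keeps $\sum_{j\neq n_0} B_j/|\lambda_j|$ finite — both are essentially automatic given the hyperbolicity hypothesis. No new dynamical estimate is needed beyond the two preceding theorems.
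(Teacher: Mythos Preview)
Your proposal is correct and essentially identical to the paper's proof. The only cosmetic difference is that the paper expands $f_s(t_i)$ in the CLV basis $\{\phi_j\}$ while you expand $\xi_i$ in the adjoint CLV basis $\{\hat{\phi}_j\}$; by biorthogonality \cref{eq:dot_product_lyapunov_vectors} both routes give the same decomposition $\xi_i^T f_s(t_i) = \sum_j (\phi_j(t_i)^T\xi_i)(\hat{\phi}_j(t_i)^T f_s(t_i))$ and invoke the same two preceding bounds (the pointwise neutral estimate inside \cref{cor:err_neutral_subspace} and \cref{thm:error_nonneutral_subspace}) in the same way.
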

\begin{proof}
     Since the covariant Lyapunov vectors form a basis of the phase space, let $f_s(t_i) = \sum_{j=0}^n \tilde{f}_{sj}(t_i)\phi_j(t_i)$, where $\tilde{f}_{sj}(t_i)\in \mathbb{R}$. Let $\sup\limits_{\substack{t\in[0\;T] \\ 1\leq j\leq n}}|\tilde{f}_{sj}(t)|\leq C_{f_s}$, which is bounded since $f_s(u)$ is bounded. Therefore,
    \begin{equation}
    \label{eq:error_sensitivity_inequality}
    \begin{aligned}
         \bigg|\frac{d\bar{J}}{ds}|_h -  \frac{d\bar{J}^\infty}{ds} \bigg| & \leq \bigg|\frac{1}{T}\int_{i=0}^N \xi_i^Tf_s(t_i)dt + \mathcal{O}(\Delta t^k)  \bigg| \\
         &\leq \frac{1}{T} \int_{i=0}^N\bigg| \sum_{j=1}^n \tilde{f}_{sj}(t_i) \phi_j(t_i)^T\xi_i \bigg|dt +  C_J\Delta t^k\\
         & \leq C_{f_s} \sum_{j=1}^n \frac{1}{T} \int_{i=0}^N |\phi_j(t_i)^T\xi_i|dt + C_J\Delta t^k
    \end{aligned}
    \end{equation}
    for a constant $C_J$.
    Using \cref{eq:neutral_err_2} and \cref{thm:error_nonneutral_subspace}, we have
    \begin{equation}
    \label{eq:sum_dot_product_errors}
    \begin{aligned}
        \sum_{j=1}^n \frac{1}{T} \int_{i=0}^N |\phi_j(t_i)^T\xi_i|dt &= \sum_{\substack{j=1 \\ j\neq n_0}}^n \frac{1}{T} \int_{i=0}^N |\phi_j(t_i)^T\xi_i|dt + \frac{1}{T} \int_{i=0}^N |f(t_i)^T\xi_i|dt \\
        & \leq \frac{\tilde{C}}{T} \sum_{\substack{j=1 \\ j\neq n_0}}^n \frac{1}{|\lambda_j|} + \Delta t^k \sum_{\substack{j=1 \\ j\neq n_0}}^n \tilde{C}_j + CT\Delta t^k,\qquad 0<\Delta t < \delta.
    \end{aligned}
    \end{equation}
    Substituting \cref{eq:sum_dot_product_errors} into the final inequality in 
\cref{eq:error_sensitivity_inequality}, we obtain constants $C_1$, $C_2$ and $C_3$ satisfying the inequality stated in the theorem. We note that the constants are bounded because, for a uniformly hyperbolic dynamical system, there is only one neutral subspace \cite{ni_adjoint_arxiv,wang_2013}. Therefore, for a finite-dimensional dynamical system, one has $|\lambda_j|>\epsilon>0$ for $j\neq n_0$ and $1/|\lambda_j| < 1/\epsilon < \infty$.
\end{proof}

\section{Numerical test cases}
The previous sections have discussed the boundary conditions which result in a stabilized march approach, proved that it converges to the correct sensitivity, developed an algorithm of stabilized march for the case $m>n_u$, and investigated that the discretization errors in various adjoint subspaces are of the order of the scheme's local error. Now, we apply the stabilized march to a few test cases of chaotic flows for sensitivity analysis. 

\label{sec:test_cases}
\subsection{Lorenz 63}
The Lorenz 63 system is used to model atmospheric convection \cite{lorentz_63} and is expressed as:
\begin{equation}
    \frac{d}{dt} 
    \begin{bmatrix}
    x \\
    y \\
    z
    \end{bmatrix} =
    \begin{bmatrix}
        \sigma (y-x) \\
        x(\rho - (z-s)) - y \\
        xy - \beta(z-s)
    \end{bmatrix}
\end{equation}
with $s = 0, \sigma=10, \beta=\frac{8}{3}$ and $\rho=25$. With these parameters, the flow is chaotic with a quasi-hyperbolic strange attractor \cite{sparrow_lorenz}, as shown in \cref{fig:Lorenz_attractor}. \Cref{fig:lyapunov_exponents_lorentz} shows its three Lyapunov exponents, computed using \cref{eq:compute_lyapunov_exponents}. The test case has one dimensional unstable, neutral, and stable subspaces. 
\begin{figure}
    \centering
    \includegraphics[scale=0.5]{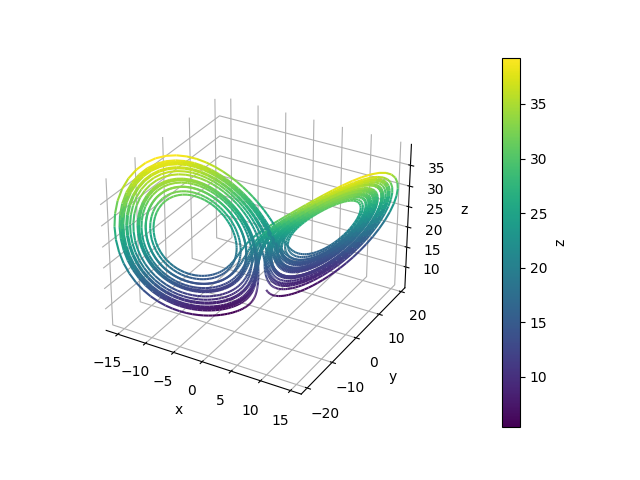}
    \caption{Lorenz attractor}
    \label{fig:Lorenz_attractor}
\end{figure}

\begin{figure}
    \centering
    \includegraphics[scale=0.5]{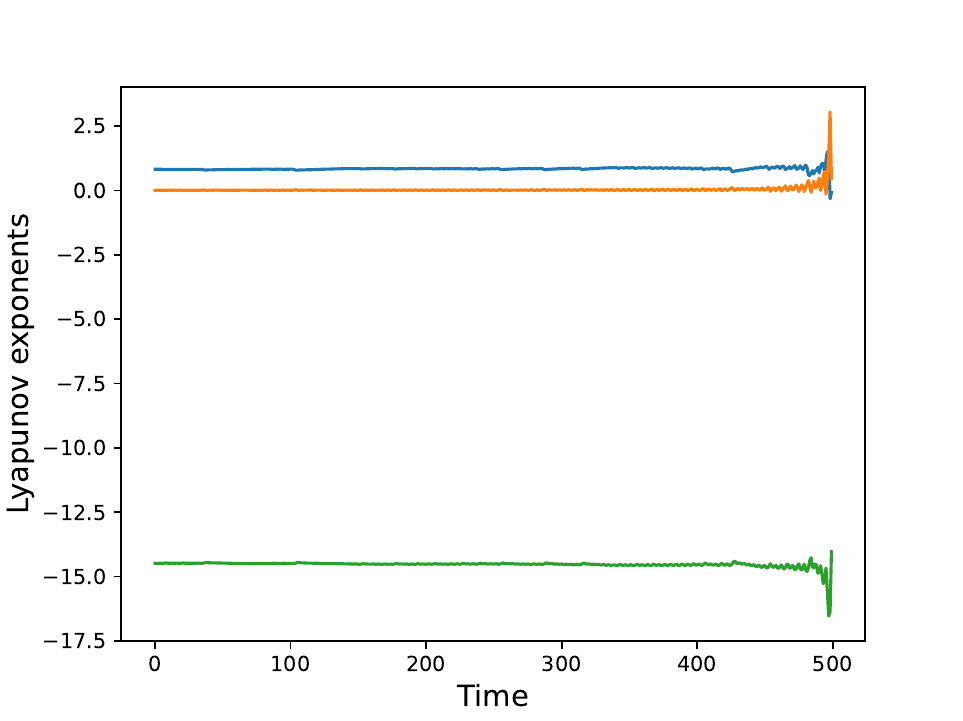}
    \caption{Finite time Lyapunov exponents. }
    \label{fig:lyapunov_exponents_lorentz}
\end{figure}
We are interested in the time average of the state $z(t)$:
\begin{equation}
\label{eq:functional_lorentz}
    \bar{J} = \lim_{T\to\infty} \frac{1}{T} \int_0^T z(t) dt,
\end{equation}
and the parameter of interest is $s$. As noted in \cite{chater_wang_2017}, changing $s$ just translates the entire attractor along the $z$-axis. Thus, the true sensitivity is $\frac{d\bar{J}}{ds} = 1$.

We use the fourth-order accurate explicit Runge-Kutta scheme with a time step of $\Delta t=0.01$ to integrate the primal and the adjoint equations in \cref{alg:m_equals_nu}. The effect of several values of $\Delta T$ on the Lyapunov exponents were investigated, ranging from $\Delta T=0.1$ to $\Delta T=10$. It was found that the Lyapunov exponents were similar until $\Delta T\leq 2.0$, after which the negative Lyapunov exponent was sensitive to and varied with $\Delta T$. In this work, intervals of $\Delta T = 0.2$ were used for the $QR$ decompositions, as has also been used in previous literature \cite{nilsas}.  The integral in the functional, \cref{eq:functional_lorentz}, was discretized using the fourth-order accurate composite Simpson's rule. To verify the rate of convergence, the adjoint stabilized march algorithm (\cref{alg:m_equals_nu}) is run with $m=1$ and spin-up times of $T_{0i}=50$ and $T_{0f}=20$. The plot of the error in sensitivity, $\bigg|\frac{d\bar{J}}{ds} - \frac{d\bar{J}}{ds}\bigg|_{\text{true}} \bigg|$, versus integration length $T$ is shown in \cref{fig:djds_err_vs_T_sqrtT_convergence_lorentz} to verify the $\mathcal{O}\left(\frac{1}{\sqrt{T}}\right)$ rate of convergence (\cref{thm:stabilized_march_2a_T}). The sensitivity at each integration time was calculated by averaging the sensitivities over 20 trajectories obtained with random initial conditions.

\begin{figure}
    \centering
    \includegraphics[scale=0.5]{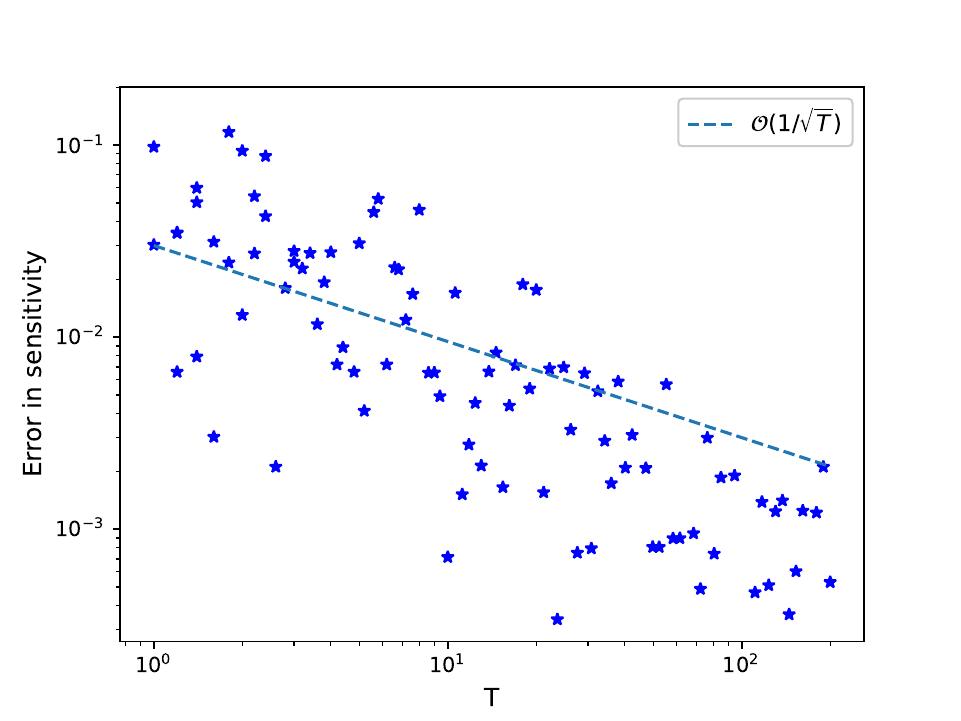}
    \caption{$|\frac{d\bar{J}}{ds} -1|$ vs $T$. }
    \label{fig:djds_err_vs_T_sqrtT_convergence_lorentz}
\end{figure}

\Cref{fig:djds_vs_T_lorentz} shows the mean sensitivities along with the confidence interval of one standard deviation for various parameters with $T$, using 10 random trajectories. It can be seen that the variance in sensitivity decreases in general as $T$ increases.

\begin{figure}
    \centering
    \includegraphics[scale=0.5]{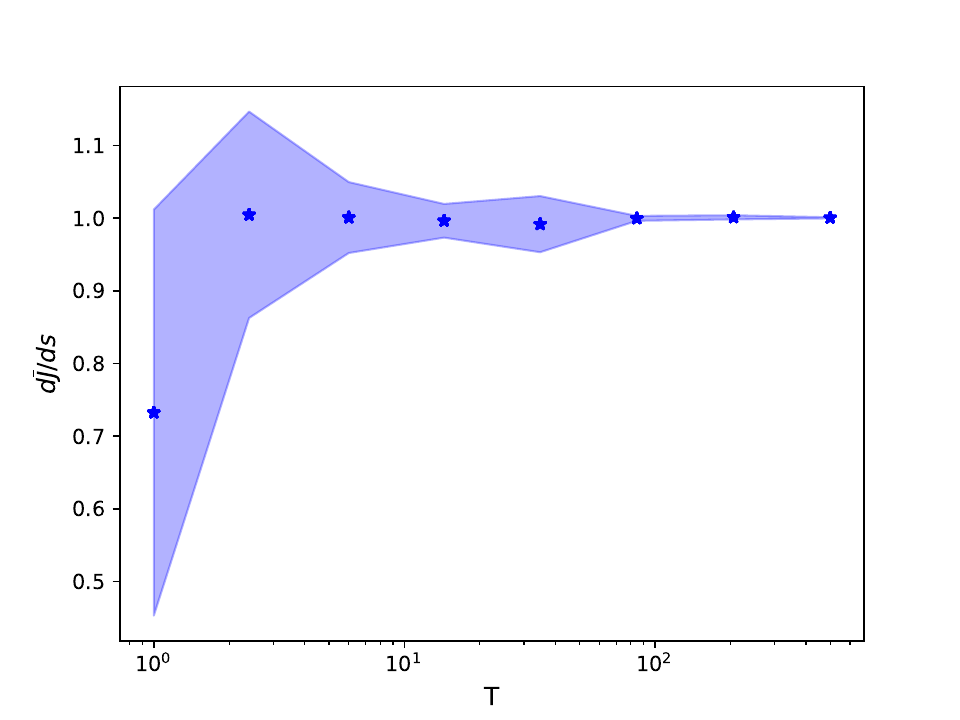}
    \caption{$\frac{d\bar{J}}{ds}$ vs $T$. }
    \label{fig:djds_vs_T_lorentz}
\end{figure}

In order to verify the bound on the average of the neutral adjoint component, \cref{cor:err_neutral_subspace}, we compute $\big|\frac{1}{T}\int_0^T\psi^Tf dt\big|$ for various time steps $\Delta t$, using 10 random trajectories for each $\Delta t$. Since we use RK4, the bound on $\big|\frac{1}{T}\int_0^T\psi^Tf dt\big|$ is expected to converge to zero at $\mathcal{O}(\Delta t^4)$, provided the integrals involved in computing the sensitivity are 4th order accurate. \Cref{fig:adjoint_neutral_convergence_lorentz} shows the convergence of $\big|\frac{1}{T}\int_0^T\psi^Tf dt\big|$ using 10 random trajectories, verifying the derived bound.    
\begin{figure}
    \centering
    \includegraphics[scale=0.5]{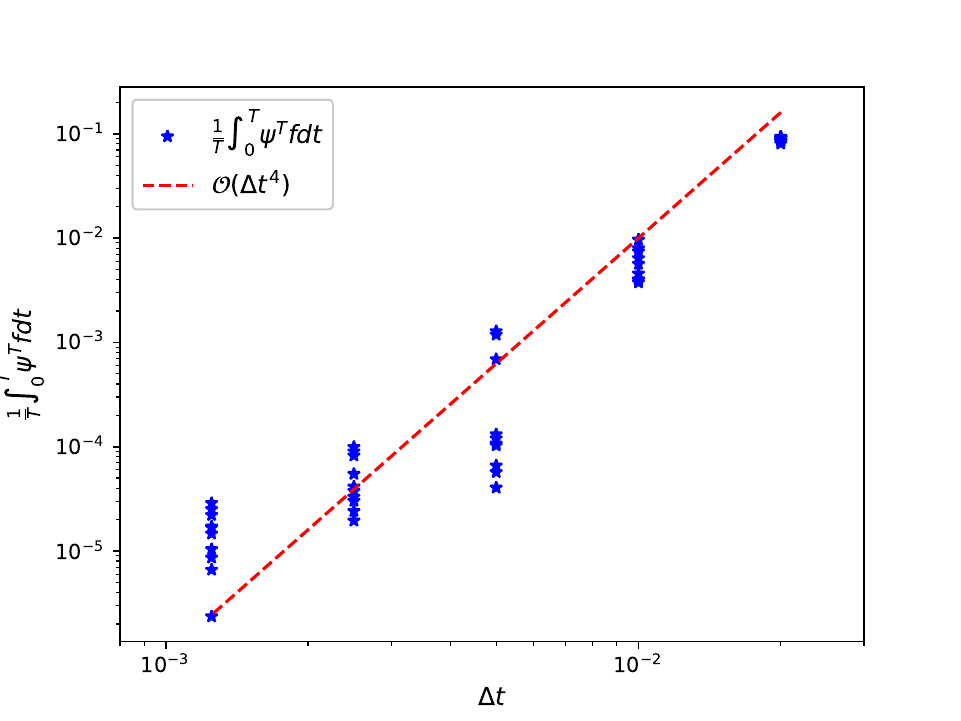}
    \caption{$\big|\frac{1}{T}\int_0^T\psi^Tf dt\big|$ vs $\Delta t$. }
    \label{fig:adjoint_neutral_convergence_lorentz}
\end{figure}

\subsection{Kuramoto-Sivasinsky equation}
The Kuramoto-Sivasinsky (KS) equation is a fourth-order chaotic PDE, given by \cite{blonigan_ks}:
\begin{equation}
\label{eq:ks}
    \frac{\partial u}{\partial t} = -(u+s)\frac{\partial u}{\partial x} - \frac{\partial^2u}{\partial x^2} - \frac{\partial^4u}{\partial x^4} \quad \quad x\in[0\;\;L],
\end{equation}
where $L=128$ and $s=0$. We use the following boundary conditions specified in \cite{blonigan_ks} which make the PDE ergodic:
\begin{equation}
    u(0,t) =  u(L,t) = 0\quad\quad {\frac{\partial u}{\partial x}}_{|{x=0}} = {\frac{\partial u}{\partial x}}_{|{x=L}} = 0 
\end{equation}

The functional is chosen as the time average of the mean solution:
\begin{equation}
    \bar{J}(s) = \lim_{T\to\infty}\frac{1}{T}\int_0^T \left(\frac{1}{L}\int_0^L u(x,t)dx \right)dt
\end{equation}
Spatial discretization is carried out using the finite difference scheme described in \cite{blonigan_ks}, and temporal integration of the primal and adjoint equations is carried out using the third-order explicit Runge–Kutta scheme in \cite{ralston_rk3}, which is known to have a minimum local error bound. The adjoint solution is evolved using the discrete adjoint of the Runge-Kutta method, as outlined in \cref{sec:appendix}.

The following results were obtained using a grid of $\Delta x=1.0$ and $\Delta t = 0.025$. QR decompositions were carried out at intervals of $\Delta T = 5$. Spin-up times were chosen to be $T_{0i}=1000$ and $T_{0f}=50$.  Sensitivities are computed using \cref{alg:m_>_nu} with $m=20$ homogeneous adjoint solutions.  \Cref{fig:primal_ks} shows the plot of the primal solution of the KS equation. \Cref{fig:lyapunov_exponent_ks} shows the plot of the first 20 Lyapunov exponents for the KS equation and indicates that there are around 14 positive Lyapunov exponents.

\begin{figure}
    \centering
    \includegraphics[scale=0.6,trim={5cm 0 0 0},clip]{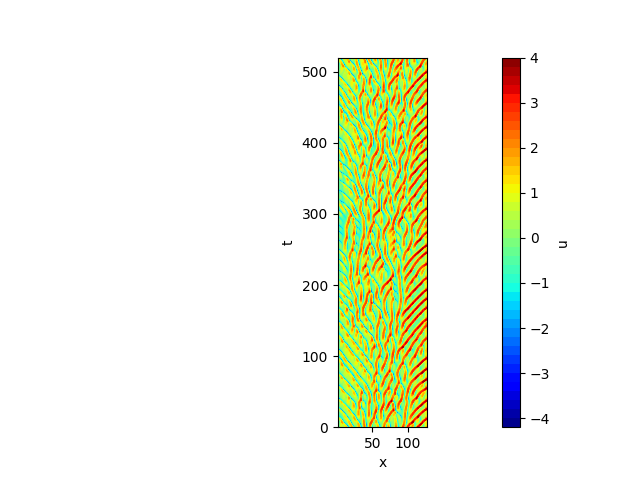}
    \caption{Primal solution $u$ of the KS equation.}
    \label{fig:primal_ks}
\end{figure}

\begin{figure}
    \centering
    \includegraphics[scale=0.5]{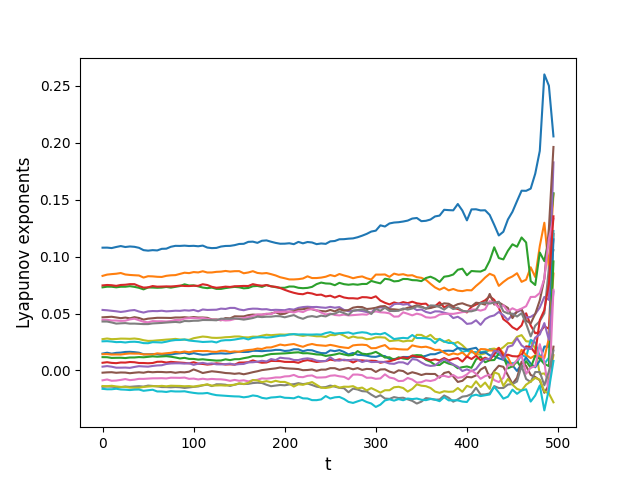}
    \caption{First 20 finite time Lyapunov exponents of KS. }
    \label{fig:lyapunov_exponent_ks}
\end{figure}

\Cref{fig:adjoint_ks} shows a plot of the adjoint solution obtained from \Cref{alg:m_>_nu}. The adjoint solution is bounded at all times. \Cref{fig:djds_vs_T_ks} shows the convergence of sensitivities vs $T$ for $s=0$, with the sensitivity averaged over 10 random trajectories. In previous literature on LSS, it was shown that the sensitivity from LSS approaches the value of $-1$ \cite{blonigan_ks}. In \cref{fig:djds_vs_T_ks}, it is evident that the sensitivities of the stabilized march approach $-1$ as $T$ increases.  

\begin{figure}
    \centering
    \includegraphics[scale=0.6,trim={5cm 0 0 0},clip]{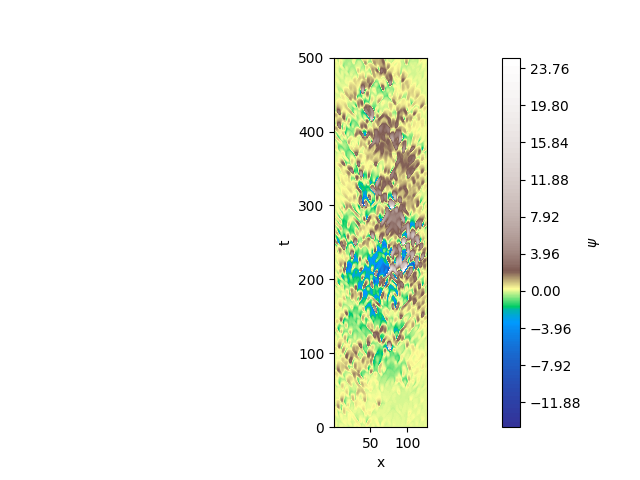}
    \caption{Adjoint solution $\psi$ of the KS equation computed by the stabilized march.}
    \label{fig:adjoint_ks}
\end{figure}

\begin{figure}
    \centering
    \includegraphics[scale=0.5]{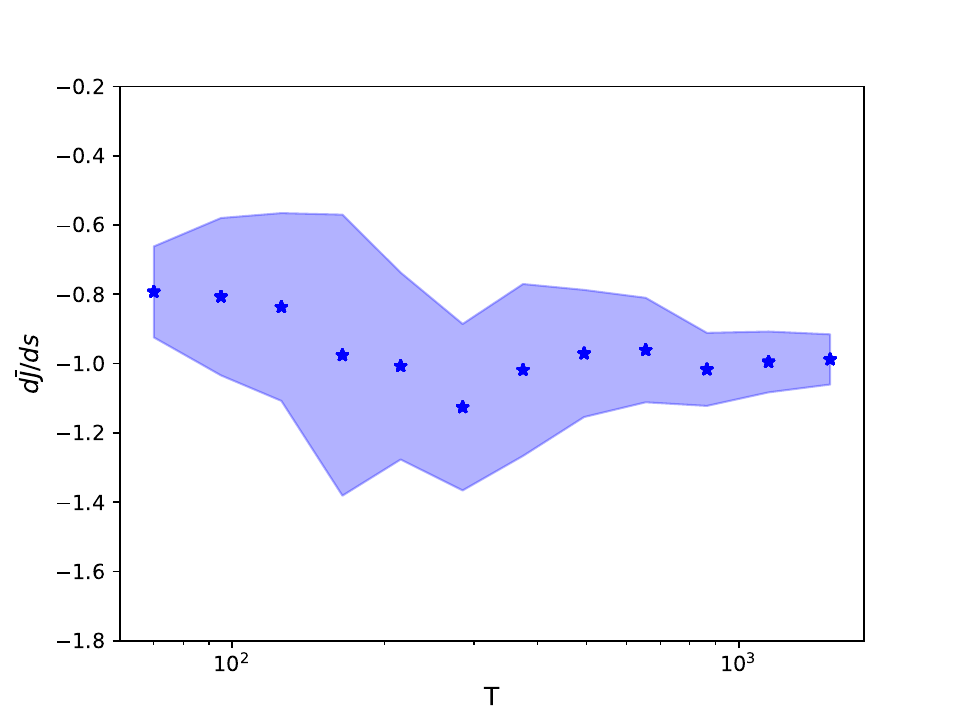}
    \caption{$\frac{d\bar{J}}{ds}$ vs $T$. }
    \label{fig:djds_vs_T_ks}
\end{figure}

In order to verify the bound on the average of the neutral adjoint component, \cref{cor:err_neutral_subspace}, we compute $\big|\frac{1}{T}\int_0^T\psi^Tf dt\big|$ for various time steps $\Delta t$ with $T=100$, using 10 random trajectories for each $\Delta t$. Since we use a third-order accurate RK, the bound on $\big|\frac{1}{T}\int_0^T\psi^Tf dt\big|$ is expected to converge to zero at $\mathcal{O}(\Delta t^3)$, provided the integrals involved in computing the sensitivity are third-order accurate. \Cref{fig:adjoint_neutral_convergence_lorentz} shows the convergence of $\big|\frac{1}{T}\int_0^T\psi^Tf dt\big|$ using 10 random trajectories, verifying the derived bound. 
\begin{figure}
    \centering
    \includegraphics[scale=0.5]{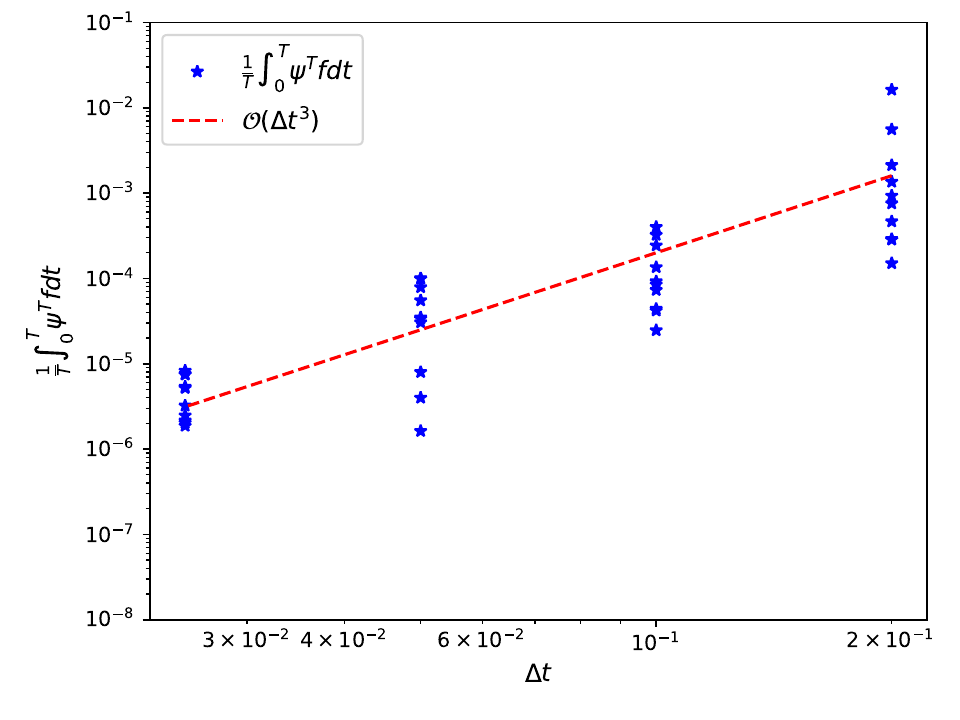}
    \caption{$\big|\frac{1}{T}\int_0^T\psi^Tf dt\big|$ vs $\Delta t$. }
    \label{fig:adjoint_neutral_convergence_ks}
\end{figure}

\section{Conclusion}
The stabilized march approach was developed to compute the adjoint shadowing direction. Specifying a convenient set of boundary conditions converted the problem of solving a space-time linear system into a sequence of triangular solves. It was proven that the method converges to the true sensitivity for large integration times. The approach developed in this work is also applicable when the exact dimension of the unstable subspace is unknown and is overestimated at the beginning of the algorithm. The method was proven to yield sensitivities with errors at the order of the local error of the integration scheme. 

While the method conveniently solves the linear system as a sequence of triangular solves, we note that, similar to NILSAS \cite{nilsas}, we require the unstable subspace which is computed by evolving $m$ homogeneous adjoint solutions backward in time. Compared to NILSAS, the current approach requires one less homogeneous adjoint solution because it does not require the neutral adjoint subspace. However, the cost of a stabilized march would still be high if the number of positive Lyapunov exponents is large. Hence, it would be of interest to investigate possible reduced order modeling strategies to reduce the cost of its implementation. We assumed the shadow trajectory to be a true representation of the perturbed dynamics. However, this might not be the case, as shown in \cite{blonigan_ks, chandramoorthy_wang_nonphysical}. Recent research has investigated the use of a linear response approach to remedy the issue. Since some of the methods based on the linear response approach still require the computation of the shadowing trajectory \cite{ni2023recursive}, the stabilized march for flows developed in this work can be used alongside such methods. 
\appendix
\section{Discrete adjoint of the Runge-Kutta scheme} 
\label{sec:appendix}
Using Runge-Kutta scheme with $n_s$ stages and the butcher tableau $\{a_{ij}\}$, $\{b_i\}$, $i,j=1,...,n_s$, the governing equation $\frac{\partial u}{\partial t} = f(u)$ can be evolved as
\begin{equation}
    \begin{aligned}
      & u_{n+1} = u_n + \Delta t \sum_{i=1}^{n_s} b_if(Y_i) \\
      & Y_i = u_n + \Delta t \sum_{j=1}^{n_s} a_{ij} f(Y_j)\qquad i=1,\cdots,n_s.
    \end{aligned}
\end{equation}
The corresponding adjoint equation, $\frac{d\psi}{dt} + f_u^*\psi+J_u = 0$, is evolved backwards using a discretely consistent scheme of the above RK scheme:
\begin{equation}
\label{eq:rk_adjoint}
    \begin{aligned}
        & \psi_{n} = \psi_{n+1} + \sum_{k=1}^{n_s} \lambda_k \\
        & \lambda_k = \Delta t f_{Y_k}^*\left(b_k\psi_{n+1} + \sum_{j=1}^{n_s} a_{jk}\lambda_j \right) + \Delta t b_k J_{Y_k}
    \end{aligned}
\end{equation}
Similar to the analysis in \cite{sandu_adjoint_rk, hairer_rk}, it can be verified that, for a $p^{\text{th}}$-order RK, the discretization of the adjoint equation in \cref{eq:rk_adjoint} yields an adjoint solution whose first $p$ derivatives at time $t_n$ are the same as that of the continuous adjoint solution for a given $u_n$ and $\psi_{n+1}$, yielding a $p^{\text{th}}$-order accurate scheme.
\newpage

\bibliographystyle{siamplain}
\bibliography{references}
\end{document}